
\documentclass[10pt,oneside,leqno]{amsart}
\usepackage{amsxtra}
\usepackage{amsopn}
\usepackage{color}
\usepackage{amsmath,amsthm,amssymb}
\usepackage{amscd}
\usepackage{amsfonts}
\usepackage{latexsym}
\usepackage{verbatim}

\theoremstyle{plain}
\newtheorem{theorem}{Theorem}[section]
\newtheorem{definition}[theorem]{Definition}
\newtheorem{lemma}[theorem]{Lemma}
\newtheorem{proposition}[theorem]{Proposition}
\newtheorem{corollary}[theorem]{Corollary}
\newtheorem{remark}[theorem]{Remark}
\newtheorem{example}[theorem]{Example}
\newtheorem{question}[theorem]{QUESTION}
\newtheorem{remark-question}[section]{Remark-Question}
\newtheorem{conjecture}[section]{Conjecture}

\newcommand\R{{\mathbb R}}
\newcommand\Z{{\mathbb Z}}


\newcommand\SU{{\rm SU}}


\newcommand\frh{{\mathfrak h}}

\newcommand\frs{{\mathfrak s}}


\newcommand{\sla}{\slash\!\!\!}

\sloppy

\begin{document}
\title[]{On the geometry underlying supersymmetric flux vacua with intermediate SU(2) structure}

\subjclass[2000]{53C25, 53C80, 81T30.}

\author{Anna Fino}
\address[Fino]{Dipartimento di Matematica\\
 Universit\`a di Torino\\
Via Carlo Alberto 10\\
10123 Torino, Italy} \email{annamaria.fino@unito.it}

\author{Luis Ugarte}
\address[Ugarte]{Departamento de Matem\'aticas\,-\,I.U.M.A.\\
Universidad de Zaragoza\\
Campus Plaza San Francisco\\
50009 Zaragoza, Spain} \email{ugarte@unizar.es}

\maketitle

\begin{abstract}
We show that supersymmetric flux vacua with constant intermediate SU(2)
structure are closely related to some special classes of half-flat
structures. More concretely, solutions of the SUSY equations IIA
possess a symplectic half-flat structure, whereas solutions of the
SUSY equations IIB admit a half-flat structure which is in certain
sense near to the balanced condition. Using this result we show that
compact simply connected manifolds do not admit type IIB solutions.
New solutions of the SUSY equations IIA and IIB are constructed from
hyperk\"ahler 4-manifolds, special hypo 5-manifolds and
6-dimensional solvmanifolds.
\end{abstract}


\section{Introduction}
In \cite{And} new supersymmetric four-dimensional Minkowski flux
vacua of type II string theory  with at least $N = 1$ supersymmetry
(SUSY) on nilmanifolds and solvmanifolds have been found, by extending
previous results by \cite{GMPT}.

In  \cite{GMPT2,GMPT3} it was shown that these supersymmetric conditions can be written in terms of the so-called generalized complex geometry \cite{Gualtieri,Hitchin} and it was proved that the internal manifold has to be a (twisted) generalized Calabi-Yau  manifold.
An $N = 1$ supergravity vacuum  implies the existence of a pair of spinors on the internal manifold.
In dimension six  the pair of spinors defines an $SU(3)$ structure, a static $SU(2)$ structure  or an intermediate $SU(2)$ structure  \cite{And}, which correspond respectively to the condition that the two spinors are parallel, orthogonal or between the two. These different cases are encoded in the context of the generalized geometry into an $SU(3) \times SU(3)$ structure on the bundle $TM \oplus T^*M$. The $SU(3) \times SU(3)$ structure can be encoded in a pair of compatible pure spinors, which are objects defined in generalized complex geometry on the generalized
tangent bundle $TM \oplus T^* M$. When one of the two pure spinors is closed, the manifold is called generalized Calabi-Yau.

An interesting question is then to look for  new explicit examples and a natural class is the  one of the nilmanifolds, since by \cite{CG} they admit  generalized Calabi-Yau structures.
In \cite{GMPT} the authors only look for $SU(3)$ and static $SU(2)$ structures, since only these ones  seemed to be compatible with the orientifold projections. But in \cite{KT} it was shown that intermediate $SU(2)$ structures are also possible if one allows a mixing of the usual $SU(2)$ structure forms under the projection conditions.

 In \cite{And} Andriot rewrote the projection conditions imposed by the orientifold for intermediate $SU(2)$ structures by introducing the \lq \lq
projection (eigen)basis", i.e. the set of structure forms which are eigenvectors for the projection. These forms define a new $SU(2)$ structure,
obtained by a rotation from the usual one and the new $SU(2)$ structure coincides (modulo a rescaling) with the one appearing with dielectric pure spinors, introduced in \cite{HT,MPZ} in the ADS/CFT context.
Since the pure spinors become  simpler to study if they are written in terms of the projection basis variables,  the supersymmetry (SUSY) conditions become simple and in this way Andriot  found for constant intermediate $SU(2)$ structures new four dimensional (Minkowski) flux vacua of type II string theory with at least $N = 1$.

Inside the class of $SU(3)$ structures there is a special one which is strictly related to the construction of metrics with holonomy $G_2$.
An $SU(3)$ structure defines a non-degenerate 2-form $F$, an almost-complex structure $J$, and a complex volume form $\Psi$;
the $SU(3)$ structure is called {\em half-flat} if $F \wedge F$ and the real part of $\Psi$ are
closed \cite{ChSal}. Hypersurfaces in $7$-dimensional manifolds with holonomy $G_2$ have a natural
half-flat structure, given by the restriction of the holonomy group representation.
In \cite{H} Hitchin showed that, starting with a half-flat manifold $(M, F, \Psi)$, if certain evolution equations have a solution coinciding with $(F, \Psi)$
at time zero then $(M, F, \Psi)$ can be embedded isometrically as a hypersurface in a
manifold with holonomy contained in $G_2$.

If in addition $F$ is closed the half-flat structure is called symplectic, and a half-flat structure with closed complex volume form $\Psi$ is known as Hermitian balanced. Nilmanifolds of dimension~$6$ admitting invariant symplectic, resp. Hermitian balanced, half-flat structures have been classified in \cite{CT}, resp. \cite{U}.
Recently $6$-dimensional nilmanifolds carrying an invariant half-flat structure have been classified by Conti in \cite{C}, extending previous partial results \cite{CSw,CT,U,ChF}.

 In this paper we show that supersymmetric flux vacua with constant intermediate $SU(2)$
structure are closely related to some special classes of half-flat
structures. More concretely,  in Section  \ref{sectSUSY} we show  that solutions of the SUSY equations IIA
have a symplectic half-flat structure, whereas solutions of the
SUSY equations IIB admit a half-flat structure which is in certain
sense near to the Hermitian balanced condition. In particular, we prove that
compact simply connected manifolds do not admit type IIB solutions.
Solutions of the SUSY equations IIA and IIB are constructed from
hyperk\"ahler 4-manifolds and, more generally, from special hypo 5-manifolds, where by hypo we mean the
natural $SU(2)$ structure induced on hypersurfaces in $6$-dimensional manifolds with holonomy SU(3)
given by the restriction of the holonomy group representation \cite{CS}.
In the last section we consider 6-dimensional solvmanifolds having both symplectic and Hermitian balanced
half-flat structures, and using them we find new solutions of the SUSY equations IIA and IIB.
The nilmanifolds considered in this paper have appeared previously in \cite{GMPT}, where solutions with $SU(3)$
or static $SU(2)$ structure were found. On the other hand, on the solvmanifold of Example~\ref{solvable-1}
$SU(3)$ structure solutions were given in \cite{CFI,GMPT}
(see also \cite{AGMP}), however to our knowledge the solvmanifold of Example~\ref{solvable-2} has not appeared previously
in relation to the SUSY equations and provides a new class of solutions.
In Section~\ref{new-examples} it is also proved that in general the solutions of equations IIA or IIB
are not stable by small deformations inside the class of half-flat structures (see Proposition~\ref{non-stable}).

\section{Intermediate $SU(2)$ structures}

In this section we follow the conventions of \cite{And}, and recall the four-dimensional Minkowski flux
vacua conditions of type II string theory with at least $N = 1$ supersymmetry
as well as their relation to the structure group of the internal manifold.

As in \cite{And} we consider type II supergravity (SUGRA) backgrounds, which are
warped products of the Minkowski space $\R^{3,1}$ and of a
$6$-dimensional compact manifold $M^6$. These warped products have
a metric of the form
\begin{equation}\label{metric}
ds^2_{(10)} = e^{2 A (y)} \eta_{\mu \nu} dx^{\mu} dx^{\nu} + g_{\mu
\nu}(y) dy^{\mu} dy^{\nu},
\end{equation}
where $\eta$ is the diagonal Minkowski metric with signature
$(3,1)$. The solutions will also
have non zero background values for some of the RR and NS fluxes. Let $vol_{(4)}$ denote the warped $4$-dimensional
volume form. Poincar\'e invariance in
dimension~$4$ requires the fluxes living on Minkowski space to be proportional to $vol_{(4)}$, so
we will focus on non trivial fluxes living on the internal manifold $M^6$.

As in \cite{GMPT} the total internal RR field $F$ is given by
$$
\begin{array}{l}
{\rm IIA:}\quad F = F_0 + F_2 + F_4 + F_6,\\[4pt]
{\rm IIB:}\quad F = F_1 + F_3 + F_5,
\end{array}
$$
where $F_k$ is the internal $k$-form RR field, and  it is related
to the total $10$-dimensional RR field-strength $F^{(10)}$ by
$$
F^{(10)} = F + vol_{(4)} \wedge \lambda (* F).
$$
Here $*$ denotes the Hodge star operator on $(M^6, g)$ and $\lambda$ is given by
$$
\lambda (A_p) = (-1)^{\frac{p (p - 1)}{2}} A_p,
$$
for every $p$-form $A_p$.

In order to find such solutions one has to solve the equations of motion and the Bianchi identities for
the fluxes, however, for the class of supergravity backgrounds
we are interested in, the equations of motion for the metric and the dilaton $\phi$  are implied by the
Bianchi identities and the $10$-dimensional supersymmetry conditions \cite{KT}, so one can solve the latter.
These conditions are the annihilation of the supersymmetry variations
of the gravitino $\psi_{\mu}$ and the dilatino $\lambda$ given by (see \cite{GMPT2})
$$
\begin{array}{l}
\delta \psi_{\mu} = D_{\mu} \epsilon + \frac{1}{4} H_{\mu} {\mathcal P} \epsilon + \frac{1}{16} e^{\phi}
\sum_n  \sla \! {F_{2n}} \, \gamma_{\mu} {\mathcal P}_n \, \epsilon  \, ,\\[5pt]
\delta \lambda = \left(\sla{\partial} \phi + \frac{1}{2} \sla \! H {\mathcal P}\right) \epsilon
+ \frac{1}{8} e^{\phi}
\sum_n (-1)^{2n} (5-2n)\,  \sla \! {F_{2n}} \,
{\mathcal P}_n  \epsilon  \, ,
\end{array}
$$
with $H_{\mu} = \frac 12  H_{\mu \nu \rho} \gamma^{\nu \rho}$, $H$ being the NSNS flux.
For IIA, ${\mathcal P} = \gamma_{11}$ and ${\mathcal P}_n = \gamma_{11}^n \sigma^1$ for $n = 0,  \dots, 5$, while
for IIB, ${\mathcal P} = -\sigma^3$, ${\mathcal P}_n  = \sigma^1$ for $n = \frac32,\frac72$
and ${\mathcal P}_n =  i \sigma^2$ for $n = \frac12,\frac52,\frac92$.

The 10-dimensional supersymmetry parameter $\epsilon$ can be written as a pair $(\epsilon^1,\epsilon^2)$ of two Majorana-Weyl
supersymmetry parameters and, because of the product structure of the solution \eqref{metric}, there should exist independent globally
defined and non-vanishing spinors $\eta^j$ on $M^6$
such that each $\epsilon^j$ is given as
$$
\epsilon^j= \zeta^j \otimes \sum_a f_a^j \eta_a^j\  +\ c.c., \quad \quad j=1,2,
$$
where $\zeta^1$ and $\zeta^2$ are the 4-dimensional supersymmetry parameters.

In order to get (at least) $N=1$ supersymmetry, it is required the existence of (at least) a pair $(\eta^1,\eta^2)$
of globally defined and non-vanishing spinors on the internal manifold $M^6$ satisfying the SUSY conditions.
The existence of this pair of internal spinors generically implies that the structure group of the tangent bundle
over the internal manifold $M^6$ is reduced to a subgroup $G\subset SO(6)$. This is due to the fact that the spinors
which are globally defined must not transform under $G$ and therefore are singlets under the $SO(6)\rightarrow G$ decomposition.
The pair $(\eta^1,\eta^2)$ can be parametrized and different types of $G$-structures are defined on the internal manifold
depending on the values of the parameters. $SU(3)$ and intermediate $SU(2)$ structures on 6-manifolds arise naturally in this context
as we recall next.

The existence of a globally defined non-vanishing spinor $\eta_{+}$ on a $6$-dimensional manifold $M^6$
defines a reduction of the structure group of the tangent bundle over $M^6$ to $SU(3)$.
Therefore, on the internal manifold we have an almost Hermitian structure $(J, g)$ and a
$(3,0)$-form $\Psi$ such that
$$
F \wedge \Psi =0, \quad \frac{4}{3} F^3 = i \Psi \wedge \overline
\Psi \neq 0,
$$
where $F$ is the fundamental $2$-form associated to  $(J, g)$.
The spinor $\eta_+$ is a Weyl spinor and it is supposed to have positive chirality and unit norm. Complex conjugation acts sending $\eta_+$ in $\eta_-$.
The forms $(F, \Psi)$ can be obtained as bilinears
of the globally defined spinor. Indeed:
$$
F_{\mu \nu} = - i \eta_+^{\dagger} \gamma_{\mu \nu} \eta_+,
\quad\quad
\Psi_{\mu \nu \rho} = - i \eta_-^{\dagger} \gamma_{\mu \nu \rho} \eta_+.
$$
An $SU(2)$ structure on a $6$-dimensional manifold $M^6$
is defined by two orthogonal globally defined spinors $\eta_{+}$ and $\chi_{+}$,
which we can suppose of unit norm, or equivalently by an almost Hermitian structure $(J, g)$,
a $(1,0)$-form $\alpha$, a real $2$-form $\omega$ and a $(2,0)$-form $\Omega$
satisfying the following conditions
 $$
 \begin{array}{c}
\omega^2 =  \frac{1}{2} \Omega \wedge \overline \Omega \neq 0,\\[4pt]
\omega \wedge \Omega =0, \quad  \Omega \wedge \Omega =0,\\[4pt]
i_{\alpha} \Omega =0, \quad i_{\alpha} \omega =0,
 \end{array}
 $$
 where by $i_{\alpha}$ we denote the contraction by the vector field dual to $\alpha$ and we take $\alpha$ such that
$\| \alpha \|^2 = i_{\overline \alpha} \alpha =\overline{\alpha}_{\overline a} g^{{\overline a} b} \alpha _b =2$.

The forms $(\alpha, \omega, \Omega)$ are related to the globally defined spinors $(\eta_{+}, \chi_{+})$ by the relations
$$
\begin{array}{l}
\alpha_{\mu} = \eta_-^{\dagger} \gamma_{\mu} \chi_+,\\[4pt]
\omega_{\mu \nu} = -  i \eta_+^{\dagger}   \gamma_{\mu \nu} \eta_+ + i \chi_+^{\dagger} \gamma_{\mu \nu} \chi_+,\\[4pt]
\Omega_{\mu \nu}  =\eta_-^{\dagger} \gamma_{\mu \nu} \chi_-.
\end{array}
$$
The spinor $\chi_+$ can be rewritten in terms of $ \eta_-$ as
$\chi_+ = \frac 12 \alpha \eta_-$.

The $SU(2)$ structure is naturally embedded in the $SU(3)$ structure
defined by $\eta_+$ by:
\begin{equation}\label{asocSU(3)}
F = \omega + \frac{i}{2} \alpha \wedge \overline \alpha, \quad \
\Psi=\alpha \wedge \Omega.
\end{equation}
Conversely, if one has an $SU(3)$ structure $(F,\Psi)$ on $M^6$  and
a $(1,0)$-form $\alpha$ of norm~$\sqrt{2}$, then it has been proved  in \cite[Appendix A2]{And}  that $\omega$ and  $\Omega$ defined by these formulas
$$
\omega = F - \frac{i}{2} \alpha \wedge \overline \alpha, \quad\
\Omega = \frac{1}{2} i_{\overline \alpha}  \Psi
$$
provide an $SU(2)$ structure.

Given a pair $(\eta^1_+,\eta^2_+)$ of globally defined non-vanishing internal spinors corresponding to the internal components of the supersymmetry parameters, one can parametrize them as
\begin{equation}\label{supersympar}
\eta^1_+ = a \eta_+,\quad\quad
\eta^2_+ = b ( k_{||} \eta_+ + k_{\perp} \frac{1}{2} \alpha \eta_-),
\end{equation}
with $0 \leq k_{||} \leq 1$, $k_{\perp} = \sqrt{1 - k_{||}^2}$
and $a,b$ non-zero complex numbers such that $a= \| \eta^1_+ \| $ and $b = \| \eta^2_+ \|$.
As in \cite{And}, we consider $b=\bar{a}$ so that the relative phases of the spinors are fixed by $|a|$ and $\theta$, the latter given by $e^{i\theta}=\bar{a}/a$.

Now depending on the values of $k_{||} $ and $k_{\perp}$ one can define starting from the spinors different type of $G$ structures. Indeed, if $k_{\perp} =0$, or equivalently if $\eta^1_+$ and $\eta^2_+$ are parallel, then one has an $SU(3)$ structure. If $k_{\perp} \neq 0$ one has an $SU(2)$ structure and in the particular case when $k_{\perp} = 1$ and $k_{||} =0$ one gets the so called static $SU(2)$ structure. But one can consider as in \cite{And} the intermediate case $k_{\perp} \neq 0$ and $k_{||} \neq 0$. The two orthogonal spinors $\eta_+$ and $\chi_+ = \frac 12 \alpha \eta_-$ define an $SU(2)$ structure $(J,g,\alpha, \omega, \Omega)$ on the internal manifold $M^6$ as above and, relating the numbers $k_ {||}$ and $k_{\perp}$ to the angle $\phi \in [0, \frac{\pi}{2}]$ between the spinors by
$$
k_{||} = \cos (\phi), \quad k_{\perp} = \sin (\phi),
$$
one obtains (see \cite{Dall'Agata})
the family of $SU(3)$ structures on $M^6$ given by
$$
\begin{array}{l}
\tilde F_{\phi} = \cos (2 \phi) \omega + \frac{i}{2} \alpha \wedge \overline \alpha  + \sin(2 \phi) Re (\Omega),\\[4pt]
\tilde \Psi_{\phi}  =  \alpha \wedge (- \sin (2 \phi)  \omega
+ \cos (2 \phi) Re (\Omega) + i Im (\Omega)),
\end{array}
$$
or equivalently the family of $SU(2)$ structures
\begin{equation} \label{familysu(2)structures}
\begin{array}{l}
\tilde \omega_{\phi} = \cos (2 \phi)\, \omega +  \sin(2 \phi) Re (\Omega),\\[4pt]
\tilde \Omega_{\phi}  = - \sin (2 \phi)\, \omega +\cos (2 \phi) Re
(\Omega) + i Im (\Omega).
\end{array}
\end{equation}

\begin{definition}\cite{And} {\rm The $SU(2)$ structure on $(M^6, J, g, \alpha, \omega, \Omega)$ defined by
\eqref{familysu(2)structures} is called} intermediate
{\rm if
$k_{||}$ and $k_{\perp} $ are both different from zero. It is called {\em static} (or {\em orthogonal}) if $k_{\perp} =1$ and $k_{||}=0$.}
\end{definition}

We recall that an $SU(3)$ structure $(F,\Psi)$ is said to be {\em half-flat} if $d (F \wedge F)
=0$ and $d(Re(\Psi))=0$. If in addition $dF=0$, then the
$SU(3)$ structure is said to be {\em symplectic half-flat}.

\begin{definition}\label{HB}
{\rm An $SU(3)$ structure $(F,\Psi)$ on $M^6$ is called} Hermitian balanced {\rm if $d (F \wedge F) =0$
and $d\Psi =0$.}
\end{definition}

From now on by a {\em symplectic half-flat}, resp. {\em Hermitian balanced},
{\em $SU(2)$ structure} $(J, g, \alpha, \omega, \Omega)$ we mean that the associated $SU(3)$ structure given by \eqref{asocSU(3)} is symplectic
half-flat, resp. Hermitian balanced.

\section{SUSY equations} \label{sectSUSY}

In this section we show that supersymmetric flux vacua with intermediate $SU(2)$
structure are closely related to the existence of special classes of half-flat
structures on the internal manifold.
We begin by recalling the SUSY conditions derived by Andriot in \cite{And}.

To solve the SUSY conditions, rather than using Killing spinors methods or $G$-structures tools, in \cite{And} it was used
the formalism of generalized complex geometry \cite{Gualtieri,Hitchin,GMPT}.  In generalized complex  geometry  for a  $d$-dimensional manifold $M$, one considers the bundle $TM \oplus T^*M$, whose sections are
generalized vectors (sums of a vector and a $1$-form). The spinors on $TM \oplus T^*M$
are  Majorana-Weyl ${\mbox{Cliff}}(d, d)$ spinors, and locally they can be seen as polyforms, i.e. sums of even/odd
differential forms, which correspond to positive/negative chirality spinors. A ${\mbox{Cliff}}(d, d)$ spinor is pure
if it is annihilated by half of the ${\mbox{Cliff}}(d, d)$ gamma matrices. Such pure spinors can be obtained also  as
tensor products of ${\mbox{Cliff}}(d)$ spinors.

In the supergravity context,  the ${\mbox{Cliff}}(6, 6)$ pure spinors are defined as a biproduct
$\Phi_{\pm} = \eta^1_+ \otimes  {\eta_{\pm}^{2 \dagger}}$ of the internal supersymmetry parameters and,
via the Fierz identity, they can be seen as polyforms
$$
\Phi_{\pm}= \eta^1_+ \otimes  {\eta_{\pm}^{2 \dagger}} = \frac{1}{8} \sum_{k =0}^6 \frac{1}{k!}  ({\eta_{\pm}^{2 \dagger}} \gamma_{\mu_k \dots \mu_1} \eta_+^1) \gamma^{\mu_1 \dots \mu_k}.
$$
The explicit expressions of the two pure spinors  in terms of the forms $(\alpha, \omega, \Omega)$  are then
$$
\begin{array}{l}
\Phi_+ = \frac{\vert a \vert^2} {8} e^{- i \theta} e^{\frac{1}{2} \alpha \wedge \overline \alpha} (k_{||} e^{- i \omega} - i k_{\perp} \Omega),\\[4pt]
\Phi_- = - \frac{\vert a \vert^2} {8}  \alpha \wedge (k_{\perp} e^{- i \omega} + i k_{||} \Omega).
\end{array}
$$
In general, by \cite{Gualtieri} a pure spinor $\Phi$ can be written as
$$
\Phi= \Omega_k \wedge e^{B + i  K},
$$
where ${\Omega}_k$ is a holomorphic $k$-form, and $B$ and $K$ are real $2$-forms.
The rank $k$  of the form $\Omega_k$ is the
type of the spinor. For the intermediate $SU(2)$ structure where both $k_{||}$ and $k_{\perp}$ are different from zero, by  \cite{And}, the two pure spinors $\Phi_+$ and $\Phi_-$  can be rewritten as
$$
\Phi_+ = \frac{\vert a \vert^2} {8} e^{- i \theta} k_{||} e^{\frac{1}{2} \alpha \wedge \overline \alpha - i \omega - i  \frac{k_{\perp}}{k_{||}} \Omega},\quad\quad
\Phi_- = - \frac{\vert a \vert^2} {8}  k_{\perp}  \alpha \wedge e^{- i \omega + i \frac{k_{||}}{k_{\perp}} \Omega}
$$
and thus $\Phi_+$ and $\Phi_-$ have respectively type $0$ and $1$. In the case of the $SU(3)$ structure (limit $k_{\perp} =0$),
$$
\Phi_+ =  \frac{\vert a \vert^2} {8} e^{- i \theta} e^{- i F},\quad\quad
\Phi_- = - i \frac{\vert a \vert^2} {8}  \Psi
$$
and the two pure spinors are of type $0$ and $3$, respectively.
In the case of a static $SU(2)$ structure (the other limit $k_{||}=0$) one has
$$
\Phi_+ = -i  \frac{\vert a \vert^2} {8} e^{- i \theta} \Omega \wedge e^{\frac{1}{2} \alpha \wedge\overline \alpha},\quad\quad
\Phi_- = -  \frac{\vert a \vert^2} {8}  \alpha \wedge e^{- i \omega}
$$
and the two  pure spinors are of type $2$ and $1$, respectively.

Two pure spinors are said to be compatible if they have three common annihilators. A pair of compatible pure spinors defines an $SU(3) \times SU(3)$ structure on $TM \oplus T^* M$. Depending on the
relation between the spinors $\eta_+^{1,2}$, this translates on $TM$  into the $SU(3)$, static $SU(2)$ or intermediate
$SU(2)$ structures discussed above. So the formalism of generalized complex geometry  allows to give a unified characterization  of the topological properties a $N = 1$ vacuum has to satisfy: it must admit an $SU(3) \times  SU(3)$
structure on $TM \oplus  T^* M$. And so to satisfy this condition, one may  verify that our vacua admit a pair of compatible pure spinors.

An $N = 1$ vacuum  should satisfy the SUSY conditions, the equations of
motion and the Bianchi identities for the fluxes.
By \cite{GMPT2,GMPT3} the SUSY equations can be  then written in
terms of pure spinors by
$$
\begin{array}{l}
(d - H \wedge) (e^{2 A - \phi} \Phi_1) =0,\\[4pt]
(d - H \wedge) (e^{A - \phi}  Re(\Phi_2)) =0,\\[4pt]
(d - H \wedge) (e^{3A - \phi}  Im (\Phi_2)) = \frac{e^{4A}}{8} *
\lambda(F),
\end{array}
$$
with $\Phi_1 = \Phi_{\pm}$, $\Phi_2 = \Phi_{\mp}$ for IIA/IIB
(upper/lower),  following the conventions of \cite{GMPT}.  The first of these equations implies that one of the two pure spinors (the one with the same parity as the RR fields) must be twisted (because of the $- H \wedge$) conformally
closed. A manifold admitting a twisted closed pure spinor is a twisted Generalized Calabi-Yau
(see \cite{H,GMPT}) and one looks for vacua on such manifolds.

The equations of motion of the fluxes are
$$
(d + H \wedge) (e^{4A} * F) =0, \quad d(e^{4A - 2 \phi} * H) = \mp
e^{4A} \sum_{p} F_p \wedge * F_{p + 2},
$$
with the upper/lower sign for IIA/IIB.

The Bianchi identities (we assume no NS source) are
$$
(d - H \wedge)  F = \delta (source), \quad d H =0,
$$
where $\delta (source)$  is the charge density of the allowed sources: space-filling $D$-branes or orientifold
planes ($O$-planes). In compactification to $4$-dimensional Minkowski, the trace of the energy momentum tensor must be zero. Then
$O$-planes are needed since they are the only known sources with a negative charge, that can thus
cancel the flux contribution to this trace. As in \cite{And} the RR Bianchi identities are then assumed to be
$$(d - H \wedge) F = \sum_i Q^i V^i,
$$
where $Q^i$  is the source charge and $V^i$  is (up to a sign) its internal co-volume (the co-volume of the cycle wrapped by the source). The sign of the $Q^i$ indicates whether the source is a $D$-brane ($Q^i > 0$) or an $O$-plane ($Q^i < 0$).

For intermediate $SU(2)$ structures (for which $\frac{k_{\perp}}{k_{||}}$
is constant) in the large volume limit from the SUSY conditions one gets that
the H Bianchi identities is automatically
satisfied. Furthermore, for this class of compactifications, it was shown in \cite{GMPT} that the equations of
motion for
the RR fluxes are implied by the SUSY conditions. And it was shown in \cite{KT} that the equation of
motion of H
is implied by the SUSY conditions and the Bianchi identities. Then, in order to find a solution, having a
pair of compatible pure spinors on an twisted generalized Calabi-Yau manifold  with at least one $O$-plane,  as in \cite{And} one has  to verify that the SUSY conditions and the RR Bianchi identities are satisfied.

The presence of $O$-planes implies that the solution has to be invariant under the
action of the orientifold.  As shown in \cite{KT} the first step to derive the orientifold projection on the pure spinors is to compute
those for the internal SUSY parameters. This can be done starting from the projection on the $10$-dimensional
SUSY spinorial parameters $\epsilon^i$  and then reducing to the internal spinors $\eta_{\pm}^i$ . In our
conventions, we have
$$
\begin{array}{l}
O5:\ \sigma(\eta_{\pm}^1)  = \eta_{\pm}^2, \quad \sigma(\eta_{\pm}^2)  = \eta_{\pm}^1,\\[4pt]
O6:\ \sigma(\eta_{\pm}^1)  = \eta_{\mp}^2, \quad \sigma(\eta_{\pm}^2)  = \eta_{\mp}^1,
\end{array}
$$
where $\sigma$  is the target space reflection in the directions transverse to the $O$-plane. Using the expressions  \eqref{supersympar} for the internal spinors, one gets as in \cite{And}  the following projection conditions at the orientifold
plane:
$$
\begin{array}{l}
O5:\ e^{i \theta} = \pm 1, \quad \alpha \perp O5, \\[4pt]
O6:\ e^{i \theta} \, {\mbox{free}}, \quad {\mbox {Re}} (\alpha) \| O6, \quad {\mbox{Im}} (\alpha) \perp 06.
\end{array}
$$
The previous conditions can be expressed on $\alpha$ as
$$
O5:\ \sigma (\alpha) = - \alpha, \quad\ \quad  O6:\  \sigma (\alpha) = \bar{\alpha}.
$$
By \cite{KT} if the $G$-structures are constant (as the one which are considering), and if we work
on nil/solvmanifolds (which will be our case), these conditions are valid everywhere (not only at the
orientifold plane). Starting from the projections on the $\eta_{\pm}^i$, as in \cite{KT,And} one may derive the projections of the pure spinors $\Phi_{\pm}$ and from them those for the $SU(2)$ structure forms. In particular one has
$$
\begin{array}{l}
O5:\ \sigma(\omega) = (k_{||}^2 - k_{\perp}^2) \omega + 2 k_{||} k_{\perp} {\mbox{Re}} (\Omega),\quad
\sigma (\Omega) = - k_{||}^2 \Omega + k_{\perp}^2 \overline \Omega + 2 k_{||} k_{\perp} \omega,\\[5pt]
O6:\ \sigma(\omega) = (k_{\perp}^2 - k_{||}^2) \omega - 2 k_{||} k_{\perp} {\mbox{Re}} (\Omega),\quad
\sigma (\Omega) =  - k_{\perp}^2  \Omega + k_{||}^2 \overline \Omega
- 2 k_{||} k_{\perp} \omega.
\end{array}
$$
By introducing as in \cite{KT}
$$
\begin{array}{l}
O5:\  k_{||} = \cos (\phi),\quad k_{\perp} = \sin (\phi),\quad 0 \leq \phi
\leq \frac{\pi}{2},\\[4pt]
O6:\ k_{||} = \cos (\phi + \frac{\pi}{2}) = - \sin(\phi),\quad k_{\perp} = \sin (\phi +  \frac{\pi}{2}) = \cos (\phi),\quad
- \frac{\pi}{2} \leq  \phi \leq 0,
\end{array}
$$
one gets  in both cases the following formulas:
$$
\begin{array}{l}
\sigma (\omega) = \cos (2 \phi) \omega + \sin (2 \phi) Re (\Omega),\\[4pt]
\sigma(Re(\Omega)) = \sin(2 \phi) \omega - \cos(2 \phi) Re(\Omega),\\[4pt]
\sigma(Im(\Omega)) = - Im(\Omega).
\end{array}
$$
Since the previous projection conditions are not very
tractable,
in \cite{And} he worked in the projection (eigen)basis
\begin{equation} \label{projbasis}
\begin{array}{l}
\omega_{\|} = \frac 12 (\omega + \sigma (\omega)), \quad \omega_{\perp} = \frac 12 (\omega - \sigma (\omega)),\\[4pt]
Re (\Omega)_{||} = \frac{1}{2} (Re (\Omega) + \sigma (Re (\Omega))),\quad
Re (\Omega)_{\perp} = \frac{1}{2} (Re (\Omega) - \sigma (Re
(\Omega))),
\end{array}
\end{equation}
which can  then be expressed in terms of the original $SU(2)$-structure as
$$
\begin{array}{l}
\omega_{\|} = \frac 12 ( (1+ \cos(2 \phi)) \omega + \sin (2 \phi) Re(\Omega)),\\[4 pt]
 \omega_{\perp} = \frac 12 ((1 - \cos(2 \phi) \omega - \sin (2 \phi) Re(\Omega)),\\[4 pt]
Re( \Omega)_{\|} = \frac{1}{2} ((1 - \cos(2 \phi)) Re(\Omega) + \sin(2 \phi) \omega),\\[4 pt]
  Re(\Omega)_{\perp} = \frac{1}{2} ((1 +\cos(2 \phi)) Re(\Omega) - \sin(2 \phi) \omega).
 \end{array}
$$
As in \cite{And} one takes  $e^A = | a |^2 = 1$ and go to the large volume
limit, i.e. $A = 0$ and $e^{\phi} = g_s$ constant. This is indeed the regime in which one  will look for solutions. The only remaining freedom is
$\theta$ that we do not really need to fix. Moreover,
we choose to look only for intermediate $SU(2)$ structure, i.e. with $k_{||} \neq 0$ and $k_{\perp} \neq 0$  constant.
Taking the coefficients constant is important because it simplifies drastically the search for solutions
and the SUSY conditions are much simpler.
In fact, by using the projection (eigen)basis \eqref{projbasis} and the results in \cite{GMPT2,GMPT3}, together with
further simplications as explained in \cite{And}, one can rewrite the SUSY equations in the following form:

\medskip

\noindent $\bullet$ \emph{SUSY equations IIA:}
\begin{equation}
\label{SUSYeqIIA} \left \{ \begin{array}{l}
d (Re (\alpha)) = 0,\\[6pt]
k_{||} H = k_{\perp} d (Im (\Omega)),\\[6pt]
 d(Re(\Omega)_{\perp}) = k_{||} k_{\perp} Re (\alpha)\wedge d(Im
(\alpha)),\\[6 pt]
H\wedge Re (\alpha) = -\frac{k_{\perp}}{k_{||}} d(Im (\alpha)\wedge
Re (\Omega)_{||}),
\end{array} \right.
\end{equation}
together with $F_0, F_2$ and $F_4$ given by
$$
\begin{array}{l}
g_s * F_0 = \frac{1}{2} k_{\perp} d (Im (\alpha)) \wedge
(Im(\Omega))^2 + \frac{1}{k_{||}} H \wedge Re(\alpha) \wedge
Re(\Omega)_{||},\\[4pt]
g_s * F_2 = - k_{||} d (Im(\alpha)) \wedge Im (\Omega) +
\frac{1}{k_{||}} d (Re(\Omega)_{||}) \wedge Re (\alpha),\\[4pt]
g_s *F_4 = - k_{\perp} d (Im (\alpha)).
\end{array}
$$

\medskip

\noindent $\bullet$ \emph{SUSY equations IIB:}
\begin{equation}
\label{SUSYeqIIB} \left \{ \begin{array}{l}
d (Re (\alpha)) =0,\\[6 pt]
d (Im (\alpha)) =0,\\[6 pt]
k_{||} H = k_{\perp} d (Im(\Omega)),\\[6pt]
Re(\alpha) \wedge H = - \frac{k_{\perp}}{k_{||}} Im(\alpha) \wedge d (Re(\Omega)_{\perp}),\\[6pt]
Im(\alpha) \wedge H = \frac{k_{\perp}}{k_{||}} Re(\alpha) \wedge d (Re(\Omega)_{\perp}),\\[6pt]
Re (\alpha) \wedge Im (\alpha) \wedge d(Re(\Omega)_{||}) =  - H
\wedge Im (\Omega),
\end{array} \right .
\end{equation}
together with $F_1$ and $F_3$ given by
$$
\begin{array}{l}
k_{\perp} e^{i \theta} g_s * F_1 = H \wedge Re (\Omega)_{||},\\[4pt]
k_{\perp} e^{i \theta} g_s * F_3 = d (Re (\Omega)_{||}).
\end{array}
$$

Note that $\left (\frac{1} {\cos {\phi}} Re (\Omega)_{\perp}, \frac{1} {\sin
{\phi}} Re (\Omega)_{||}, Im (\Omega), \alpha \right )$ define a new
$SU(2)$ structure $(\alpha, \omega', \Omega')$ on $M$ with
$$
\omega' = \frac{1} {\sin {\phi}} Re (\Omega)_{||}, \quad
\Omega' =  \frac{1} {\cos {\phi}} Re (\Omega)_{\perp} + i\, Im
(\Omega),
$$
and then a new $SU(3)$ structure $(F',\Psi')$. Moreover, since we will consider only O6 planes in IIA
and O5 planes in IIB, by using a local adapted basis for this
$SU(2)$-structure (see \cite{CS}), one has that
$(\hat{\alpha}, \hat{\omega}, \hat{\Omega})$ given by
$$
\hat{\alpha} = Re(\alpha) + i\, k_{||} Im (\alpha),\quad
\hat{\omega} = \frac{1} {k_{\perp}} Re (\Omega)_{\perp}, \quad
\hat{\Omega} =  Im (\Omega) - i\, \frac{1} {k_{||}} Re (\Omega)_{||},
$$
is also an $SU(2)$ structure on $M$ in the IIA case, and
$$
\hat{\alpha}_1 = k_{||}  Re (\alpha) + i\, Im (\alpha),\quad
\hat{\omega} = \frac{1} {k_{\perp}} Re (\Omega)_{||}, \quad
\hat{\Omega} = \frac{1} {k_{||}} Re (\Omega)_{\perp} + i\, Im (\Omega),
$$
$$
\hat{\alpha}_2 = k_{||}  Im (\alpha) - i\, Re (\alpha),\quad
\hat{\omega} = \frac{1} {k_{\perp}} Re (\Omega)_{||}, \quad
\hat{\Omega} = \frac{1} {k_{||}} Re (\Omega)_{\perp} + i\, Im (\Omega),
$$
are $SU(2)$ structures on $M$ in the IIB case. We will use these structures
in the next theorems, and the corresponding $SU(3)$ structures will be denoted by $(\hat{F}, \hat{\Psi})$.
Notice that the almost complex structure $\hat{J}$ and the metric $\hat{g}$ change with respect to those
given by $(\alpha, \omega', \Omega')$.

\begin{theorem}\label{IIA} Let $(M^6, J, g, \alpha, \omega, \Omega)$ be a $6$-dimensional manifold endowed with
an $SU(2)$ structure such that the $2$-forms $Re(\Omega)_{||},
Re(\Omega)_{\perp}, Im(\Omega)$ satisfy the
equations~\eqref{SUSYeqIIA}, then $M^6$ admits a symplectic
half-flat $SU(2)$ structure $(\hat J, \hat g, \hat \alpha,
\hat \omega, \hat \Omega)$ with $d (Re (\hat \alpha)) =0$.
Conversely, if $M^6$  has a symplectic half-flat $SU(2)$ structure
$(\hat J, \hat  g,  \hat \alpha, \hat \omega, \hat \Omega)$ such
that $d(Re( \hat \alpha)) =0$, then the forms  $(Re (\Omega)_{||} ,
Re (\Omega)_{\perp}, Im (\Omega), \alpha)$ defined by
$$
 \frac{1} {k_{\perp}} Re (\Omega)_{\perp} =\hat \omega, \quad  Im (\Omega) - i \frac{1} {k_{||}} Re (\Omega)_{||} =  \hat
\Omega, \quad   Re(\alpha) + i k_{||} Im (\alpha) = \hat  \alpha
$$
are a solution of the equations \eqref{SUSYeqIIA}.
\end{theorem}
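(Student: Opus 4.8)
The plan is to prove both implications by a direct translation between the four relations of \eqref{SUSYeqIIA} and the closedness conditions characterising a symplectic half-flat $SU(2)$ structure with $d(Re(\hat\alpha))=0$. Recall that for $(\hat J,\hat g,\hat\alpha,\hat\omega,\hat\Omega)$ with $\hat\alpha=Re(\alpha)+i\,k_{||}Im(\alpha)$, $\hat\omega=\frac{1}{k_{\perp}}Re(\Omega)_{\perp}$, $\hat\Omega=Im(\Omega)-i\,\frac{1}{k_{||}}Re(\Omega)_{||}$ the associated $SU(3)$ structure is $\hat F=\hat\omega+\frac{i}{2}\hat\alpha\wedge\overline{\hat\alpha}$, $\hat\Psi=\hat\alpha\wedge\hat\Omega$, and that being symplectic half-flat amounts to $d\hat F=0$ and $d(Re(\hat\Psi))=0$ (the weaker condition $d(\hat F\wedge\hat F)=0$ then being automatic). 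That $(\hat\alpha,\hat\omega,\hat\Omega)$ actually satisfies the algebraic $SU(2)$-structure identities was already recorded before the theorem following \cite{CS,And}, so only the differential conditions need to be checked; throughout I would keep fixed the graded Leibniz rule $d(\eta\wedge\xi)=d\eta\wedge\xi+(-1)^{|\eta|}\eta\wedge d\xi$ together with graded commutativity of $\wedge$.

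For the direct implication I would first rewrite $\hat F$ and $Re(\hat\Psi)$ in the projection basis: since $\frac{i}{2}\hat\alpha\wedge\overline{\hat\alpha}=k_{||}\,Re(\alpha)\wedge Im(\alpha)$,
$$\hat F=\frac{1}{k_{\perp}}Re(\Omega)_{\perp}+k_{||}\,Re(\alpha)\wedge Im(\alpha),\qquad Re(\hat\Psi)=Re(\alpha)\wedge Im(\Omega)+Im(\alpha)\wedge Re(\Omega)_{||}.$$
Differentiating the first and using $d(Re(\alpha))=0$, the second summand contributes $-k_{||}Re(\alpha)\wedge d(Im(\alpha))$, which is cancelled by $\frac{1}{k_{\perp}}d(Re(\Omega)_{\perp})$ thanks to the third equation of \eqref{SUSYeqIIA}; hence $d\hat F=0$. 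Substituting the second equation as $H=\frac{k_{\perp}}{k_{||}}d(Im(\Omega))$ into the fourth one gives $d(Im(\Omega))\wedge Re(\alpha)=-d(Im(\alpha)\wedge Re(\Omega)_{||})$, i.e. $Re(\alpha)\wedge d(Im(\Omega))=d(Im(\alpha)\wedge Re(\Omega)_{||})$ — here it is essential that $d(Im(\Omega))$ is a $3$-form and thus anticommutes with $Re(\alpha)$ — and combined with $d(Re(\alpha)\wedge Im(\Omega))=-Re(\alpha)\wedge d(Im(\Omega))$ this yields $d(Re(\hat\Psi))=0$. Since $Re(\hat\alpha)=Re(\alpha)$, the condition $d(Re(\hat\alpha))=0$ is precisely the first equation. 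Thus $(\hat J,\hat g,\hat\alpha,\hat\omega,\hat\Omega)$ is symplectic half-flat with $d(Re(\hat\alpha))=0$.

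For the converse I would invert the three displayed relations, obtaining $Re(\Omega)_{\perp}=k_{\perp}\hat\omega$, $Im(\Omega)=Re(\hat\Omega)$, $Re(\Omega)_{||}=-k_{||}Im(\hat\Omega)$ and $\alpha=Re(\hat\alpha)+\frac{i}{k_{||}}Im(\hat\alpha)$, recovering also $\omega$ and $Re(\Omega)$ from $Re(\Omega)_{||},Re(\Omega)_{\perp}$ via the projection-basis formulas, so that $(\alpha,\omega,\Omega)$ is again a genuine $SU(2)$ structure (the map between the two structures being the invertible rotation described just before the theorem). Setting the NSNS flux to be $H:=\frac{k_{\perp}}{k_{||}}d(Im(\Omega))$ makes the second equation of \eqref{SUSYeqIIA} hold by construction, with $H$ automatically closed since exact; the first equation is $d(Re(\alpha))=d(Re(\hat\alpha))=0$, the hypothesis. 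Expanding $0=d\hat F$ with $\hat F=\frac{1}{k_{\perp}}Re(\Omega)_{\perp}+k_{||}Re(\alpha)\wedge Im(\alpha)$ and $d(Re(\alpha))=0$ reproduces the third equation, while expanding $0=d(Re(\hat\Psi))$ with $Re(\hat\Psi)=Re(\alpha)\wedge Im(\Omega)+Im(\alpha)\wedge Re(\Omega)_{||}$ gives $Re(\alpha)\wedge d(Im(\Omega))=d(Im(\alpha)\wedge Re(\Omega)_{||})$, which is the fourth equation after reinserting $H$. Hence the forms solve \eqref{SUSYeqIIA}.

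In both directions the same short computation is simply read in the two possible orders, so the genuine content is modest. The only place where I expect a slip is the sign bookkeeping: the cancellation producing $d(Re(\hat\Psi))=0$ relies on $d(Im(\Omega))$ being odd-degree and hence anticommuting with the $1$-form $Re(\alpha)$, and one must keep distinct the Leibniz signs for a $1$-form wedged with a $2$-form versus with a $3$-form. Accordingly I would establish the three identities $d(Re(\hat\alpha))=0$, $d\hat F=0$, $d(Re(\hat\Psi))=0$ once and for all in terms of the projection-basis forms, and then read each line of \eqref{SUSYeqIIA} off them, with $H$ fixed by its defining relation.
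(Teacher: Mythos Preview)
Your proposal is correct and follows essentially the same route as the paper's own proof: both arguments express $\hat F$ and $Re(\hat\Psi)$ in the projection-basis forms and match the closedness conditions $d\hat F=0$, $d(Re(\hat\Psi))=0$, $d(Re(\hat\alpha))=0$ line by line with the equations of \eqref{SUSYeqIIA}, using $H=\frac{k_\perp}{k_{||}}d(Im(\Omega))$ to eliminate $H$. Your write-up is slightly more explicit about the Leibniz/anticommutation signs, but the content and strategy are the same.
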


\begin{proof} For type IIA the $2$-forms
$\frac{1} {k_{\perp}} Re (\Omega)_{\perp}, \frac{1} {k_{||}} Re
(\Omega)_{||}, Im (\Omega)$ together with the complex 1-form
$Re(\alpha) + i k_{||} Im (\alpha)$ define a new $SU(2)$ structure
with
$$
\hat \omega = \frac{1} {k_{\perp}} Re (\Omega)_{\perp}, \quad
\hat \Omega = Im (\Omega)-i \frac{1} {k_{||}} Re (\Omega)_{||} ,
\quad \hat \alpha = Re(\alpha) + i k_{||} Im (\alpha)
$$
and then  a new $SU(3)$ structure $(\hat J, \hat F, \hat \Psi)$, with
$$
\hat F = \hat \omega + k_{||} Re(\alpha) \wedge Im(\alpha),
\quad\quad \hat \Psi = \hat \alpha \wedge \hat \Omega.
$$
By the
second equation
of \eqref{SUSYeqIIA} we have
$$
H = \frac{k_{\perp}}{k_{||}} d (Im(\Omega)).
$$
Then by the last equation of \eqref{SUSYeqIIA} we obtain
$$
d (Im(\Omega)) \wedge Re(\alpha) = - d (Im(\alpha) \wedge
Re(\Omega)_{||}),
$$
i.e. that the real part of  the form $\hat \Psi = \hat \alpha
\wedge \hat \Omega$ is closed.

Moreover, by
$$
 d(Re(\Omega)_{\perp}) = k_{||} k_{\perp} Re (\alpha)\wedge d(Im
(\alpha)),
$$
it follows that
$$
d(Re(\Omega)_{\perp} + k_{||} k_{\perp} Re(\alpha) \wedge
Im(\alpha))= k_{\perp} d(\hat F) =0,
$$
and so we have a symplectic half-flat $SU(2)$ structure on $M^6$.
Conversely, if $M^6$  has a symplectic half-flat $SU(2)$ structure
$(\hat J, \hat  g,  \hat \alpha, \hat \omega, \hat \Omega)$ such
that $d(Re( \hat \alpha)) =0$,  we have that  the fundamental form
$$
\hat F = \hat \omega + Re(\hat \alpha) \wedge Im (\hat \alpha) =
\frac{1} {k_{\perp}} Re (\Omega_{\perp}) + k_{||} Re(\alpha) \wedge
Im(\alpha)
$$
is closed and thus the equation
$$d(Re(\Omega)_{\perp}) = k_{||} k_{\perp} Re (\alpha)\wedge d(Im
(\alpha))
$$
 in  \eqref{SUSYeqIIA} holds. By the closedness  of  the real part of the $(3,0)$-form
 $
 \hat \Psi  = \hat \alpha  \wedge \hat \Omega
 $ we have that  also the last  equation in  \eqref{SUSYeqIIA} is satisfied for $H =  \frac{k_{\perp}}{k_{||}} d (Im (\Omega))$.
\end{proof}

\begin{theorem}\label{IIB}
Let $(M^6, J, g, \alpha, \omega, \Omega)$ be a $6$-dimensional
manifold endowed with an $SU(2)$ structure such that the $2$-forms
$Re(\Omega)_{||}, Re(\Omega)_{\perp}, Im(\Omega)$ satisfy the
equations \eqref{SUSYeqIIB}, then $M^6$ admits two half-flat
$SU(2)$ structures $(\hat J_1, \hat g_1, \hat \alpha_1,
\hat \omega, \hat \Omega)$ and $(\hat J_2, \hat g_2, \hat
\alpha_2, \hat \omega, \hat \Omega)$ such that $d (\hat
\alpha_1) =0$ and $\hat \alpha_2= k_{||}\,  Im(\hat \alpha_1)-i
\frac{Re(\hat \alpha_1)}{k_{||}}$.

Conversely, let $M^6$ be a 6-dimensional manifold endowed with a
half-flat $SU(2)$ structure $(\hat J_1, \hat g_1, \hat \alpha_1,
\hat \omega,  \hat \Omega)$ satisfying $d (\hat \alpha_1) =0$; if
$k_{||}\in (0,1)$ is such that the $SU(2)$ structure $(\hat J_2,
\hat g_2, \hat \alpha_2= k_{||}\, Im(\hat \alpha_1)-i \frac{Re(\hat
\alpha_1)}{k_{||}}, \hat \omega, \hat \Omega)$ is half-flat, then
the forms $(Re (\Omega)_{||}, Re (\Omega)_{\perp}, Im (\Omega),
\alpha)$ defined by
$$
\frac{1} {k_{\perp}} Re (\Omega)_{||} =\hat \omega, \quad
\frac{1}{k_{||}} Re (\Omega)_{\perp} + i Im (\Omega) = \hat \Omega,
\quad k_{||} Re(\alpha) + i Im (\alpha) = \hat \alpha_1,
$$ where $k_{\perp}=\sqrt{1-k_{||}^2}$, are a solution of the equations \eqref{SUSYeqIIB}.
\end{theorem}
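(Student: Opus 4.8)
The plan is to mirror the proof of Theorem~\ref{IIA}, now working with the two $SU(2)$ structures $(\hat J_i,\hat g_i,\hat\alpha_i,\hat\omega,\hat\Omega)$, $i=1,2$, introduced (and shown to be $SU(2)$ structures on $M^6$) just before the statement, together with the associated $SU(3)$ structures $(\hat F_i,\hat\Psi_i)$, where $\hat F_i=\hat\omega+Re(\hat\alpha_i)\wedge Im(\hat\alpha_i)$ and $\hat\Psi_i=\hat\alpha_i\wedge\hat\Omega$. Two algebraic observations drive everything. First, since $\hat\alpha_1=k_{||}\,Re(\alpha)+i\,Im(\alpha)$ and $\hat\alpha_2=k_{||}\,Im(\alpha)-i\,Re(\alpha)$, one has $Re(\hat\alpha_1)\wedge Im(\hat\alpha_1)=k_{||}\,Re(\alpha)\wedge Im(\alpha)=Re(\hat\alpha_2)\wedge Im(\hat\alpha_2)$, hence $\hat F_1=\hat F_2=:\hat F$; thus $d(\hat F_i^2)=0$ is a single condition, and the identity $\hat\alpha_2=k_{||}\,Im(\hat\alpha_1)-i\,\tfrac{Re(\hat\alpha_1)}{k_{||}}$ is immediate from these definitions. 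Second, the $SU(2)$ relations $\omega\wedge\Omega=0$, $\Omega\wedge\Omega=0$, $\omega^2=\tfrac12\Omega\wedge\overline\Omega$ give $\omega\wedge Re(\Omega)=\omega\wedge Im(\Omega)=Re(\Omega)\wedge Im(\Omega)=0$ and $(Re(\Omega))^2=(Im(\Omega))^2=\omega^2$; combined with the projection-basis expressions (here $k_{||}=\cos\phi$, $k_{\perp}=\sin\phi$), namely $\tfrac{1}{k_{\perp}}Re(\Omega)_{||}=k_{\perp}Re(\Omega)+k_{||}\omega$ and $\tfrac{1}{k_{||}}Re(\Omega)_{\perp}=k_{||}Re(\Omega)-k_{\perp}\omega$, this yields $\hat\omega^2=\omega^2$, as well as $\hat\omega^2=(Im(\hat\Omega))^2$.

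For the direct implication, the first two equations of \eqref{SUSYeqIIB} are precisely $d(Re(\hat\alpha_1))=d(Im(\hat\alpha_1))=0$, so $d\hat\alpha_1=0$, whence also $d\hat\alpha_2=0$ and $d\hat F=d\hat\omega=\tfrac{1}{k_{\perp}}d(Re(\Omega)_{||})$. Substituting $H=\tfrac{k_{\perp}}{k_{||}}d(Im(\Omega))$ (the third equation) into the fifth one and using $d(Re(\alpha))=d(Im(\alpha))=0$ gives $Re(\alpha)\wedge d(Re(\Omega)_{\perp})=Im(\alpha)\wedge d(Im(\Omega))$, which, after expanding $Re(\hat\Psi_1)=Re(\alpha)\wedge Re(\Omega)_{\perp}-Im(\alpha)\wedge Im(\Omega)$, is exactly $d(Re(\hat\Psi_1))=0$; the same substitution into the fourth equation gives $d(Re(\hat\Psi_2))=0$. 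For the remaining closure, I would compute $\hat F\wedge d\hat F=\tfrac{1}{k_{\perp}}(\hat\omega+k_{||}Re(\alpha)\wedge Im(\alpha))\wedge d(Re(\Omega)_{||})=\tfrac12 d(\hat\omega^2)+\tfrac{k_{||}}{k_{\perp}}\,Re(\alpha)\wedge Im(\alpha)\wedge d(Re(\Omega)_{||})$; the sixth equation turns the last term into $-\tfrac{k_{||}}{k_{\perp}}H\wedge Im(\Omega)=-d(Im(\Omega))\wedge Im(\Omega)=-\tfrac12 d((Im(\Omega))^2)$, so $\hat F\wedge d\hat F=\tfrac12 d(\hat\omega^2-(Im(\Omega))^2)=\tfrac12 d(\omega^2-\omega^2)=0$. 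Hence $(\hat F_1,\hat\Psi_1)$ and $(\hat F_2,\hat\Psi_2)$ are half-flat with $d\hat\alpha_1=0$.

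For the converse, I would define $Re(\Omega)_{||},Re(\Omega)_{\perp},Im(\Omega),\alpha$ by the stated formulas (equivalently, recover the original intermediate $SU(2)$ structure $(\alpha,\omega,\Omega)$ by inverting the rotation) and set $H:=\tfrac{k_{\perp}}{k_{||}}d(Im(\hat\Omega))$, so that the third equation of \eqref{SUSYeqIIB} holds by construction; then $Re(\Omega)_{||}=k_{\perp}\hat\omega$, $Re(\Omega)_{\perp}=k_{||}Re(\hat\Omega)$, $Im(\Omega)=Im(\hat\Omega)$, $k_{||}Re(\alpha)=Re(\hat\alpha_1)$, $Im(\alpha)=Im(\hat\alpha_1)$. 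The first two equations follow from $d\hat\alpha_1=0$. Expanding $d(Re(\hat\Psi_1))=0$ with $d\hat\alpha_1=0$ gives $Re(\hat\alpha_1)\wedge d(Re(\hat\Omega))=Im(\hat\alpha_1)\wedge d(Im(\hat\Omega))$, which becomes the fifth equation after substituting $H$; likewise $d(Re(\hat\Psi_2))=0$ — where $d\hat\alpha_2=0$ is automatic, $Re(\hat\alpha_2),Im(\hat\alpha_2)$ being real multiples of $Im(\hat\alpha_1),Re(\hat\alpha_1)$ — gives the fourth equation. Finally $d(\hat F_1^2)=0$ together with $\hat\omega^2=(Im(\hat\Omega))^2$ yields $Re(\hat\alpha_1)\wedge Im(\hat\alpha_1)\wedge d\hat\omega=-\tfrac12 d((Im(\hat\Omega))^2)$, which upon multiplication by $k_{\perp}/k_{||}$ is the sixth equation. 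Thus all of \eqref{SUSYeqIIB} is satisfied.

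All the computations are routine bilinear manipulations, exactly as in the proof of Theorem~\ref{IIA}. The two points needing care are (i) the identity $\hat F_1=\hat F_2$, which is what makes the correct conclusion ``two half-flat $SU(2)$ structures sharing the same $\hat\omega$ and $\hat\Omega$'' rather than some more delicate compatibility between $(\hat F_1,\hat\Psi_1)$ and $(\hat F_2,\hat\Psi_2)$; and (ii) the bookkeeping that pairs each equation of \eqref{SUSYeqIIB} with a half-flat condition, namely $d(Re(\hat\Psi_1))=0$ absorbs equations (3) and (5), $d(Re(\hat\Psi_2))=0$ absorbs (3) and (4), $d(\hat F^2)=0$ absorbs (3) and (6) together with the $SU(2)$ identity $\omega^2=(Im(\Omega))^2$, and (1)--(2) are just the closedness of $\hat\alpha_1$. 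I do not anticipate a genuine obstacle, since the fact that $(\hat\alpha_i,\hat\omega,\hat\Omega)$ are honest $SU(2)$ structures on $M^6$ in the IIB (O5) case has already been recorded before the statement.
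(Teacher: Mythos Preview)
Your proposal is correct and follows essentially the same approach as the paper's proof: you use the same $SU(2)$ structures $(\hat\alpha_i,\hat\omega,\hat\Omega)$, the same identification $\hat F_1=\hat F_2$, the same pairing of equations (4) and (5) with $d(Re\,\hat\Psi_2)=0$ and $d(Re\,\hat\Psi_1)=0$ respectively, and the same use of the $SU(2)$ identity $\hat\omega^2=(Im(\Omega))^2$ to reduce equation (6) to $d(\hat F^2)=0$. Your bookkeeping is slightly tidier than the paper's (which first derives $d(\alpha\wedge(Re(\Omega)_\perp+i\,Im(\Omega)))=0$ from the first five equations and then separately re-derives the two real-part closures), but the substance is identical.
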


\begin{proof} As we already remarked previously for type IIB the $2$-forms
$\frac{1}{k_{||}} Re (\Omega)_{\perp}, \frac{1} {k_{\perp}} Re
(\Omega)_{||}, Im (\Omega), \alpha$ define a new $SU(2)$ structure
with
$$
\hat \omega = \frac{1} {k_{\perp}} Re (\Omega)_{||}, \quad \hat
\Omega =  \frac{1} {k_{||}} Re (\Omega)_{\perp} + i Im (\Omega),
\quad \hat \alpha_1 = k_{||}  Re (\alpha) + i Im (\alpha)
$$
and then a new $SU(3)$ structure $(\hat J_1, \hat F, \hat
\Psi_1)$, with
$$
\hat F = \hat \omega + k_{||} Re(\alpha) \wedge Im(\alpha),
\quad \hat \Psi_1 = \hat \alpha_1 \wedge \hat \Omega.
$$
Suppose that the $2$-forms $Re(\Omega)_{||}, Re(\Omega)_{\perp},
Im(\Omega)$ satisfy the equations \eqref{SUSYeqIIB}, then
by the first five equations
we have
$$
d (\alpha \wedge (Re (\Omega)_{\perp} + i Im (\Omega)) =0.
$$
Then, $d(Re (\hat \alpha_1 \wedge \hat \Omega)) =0$.

Since $(\hat \alpha_1,\hat \omega,\hat \Omega)$ is an SU(2) structure, the condition
$$
Im(\Omega)^2= \frac{1}{k_{\perp}^2} Re(\Omega)_{||}^2
$$
is satisfied \cite{CS} and by the last equation of \eqref{SUSYeqIIB} we get
$$
 \begin{array}{lcl}
 Re (\alpha) \wedge Im (\alpha) \wedge d(Re(\Omega)_{||}) &=&  - \frac{k_{\perp}}{k_{||}} d(Im(\Omega))
\wedge Im (\Omega)\\[5 pt]
&=&  - \frac{1}{k_{\perp} k_{||}}  d(Re(\Omega)_{||}) \wedge
Re(\Omega)_{||}.
\end{array}
$$
Therefore
 $$
 d ((k_{\perp} k_{||} Re (\alpha) \wedge Im (\alpha) + Re(\Omega)_{||})^2) = 0,
 $$
i.e. $d(\hat F \wedge \hat F) =0$. Then we have a half-flat
$SU(2)$ structure.

Consider
$$\hat\alpha_2 =k_{||}  Im (\alpha) -i Re (\alpha)$$ and define
$\hat\Psi_2=\hat \alpha_2 \wedge \hat \Omega$. We have two
SU(3) structures $(\hat F, \hat \Psi_1)$ and $(\hat F,
\hat \Psi_2)$. Indeed, $$Re (\hat \alpha_1)\wedge Im
(\hat \alpha_1)= Re (\hat \alpha_2)\wedge Im (\hat \alpha_2)=
k_{||} Re(\alpha) \wedge Im(\alpha),$$  so $\hat F$ is the same in
both cases.

Now, equation
$$
Im(\alpha) \wedge H =
\frac{k_{\perp}}{k_{||}} Re(\alpha) \wedge d
(Re(\Omega)_{\perp})
$$
implies that $d(Re\, \hat \Psi_1)=0$, whereas equation
$$
Re(\alpha) \wedge H = -\frac{k_{\perp}}{k_{||}} Im(\alpha) \wedge d
(Re(\Omega)_{\perp})
$$
implies that $d(Re\, \hat \Psi_2)=0$. In conclusion we have that $(\hat F,
\hat \Psi_1)$ and $(\hat F, \hat \Psi_2)$ are half-flat.

To prove the converse, we first notice that the fundamental form
${\hat F}_1$ is given by ${\hat F}_1=\hat \omega + Re(\hat \alpha_1)
\wedge Im(\hat \alpha_1)= \frac{1} {k_{\perp}} Re (\Omega)_{||} +
k_{||} Re(\alpha) \wedge Im(\alpha)$, and therefore the closedness
of the 4-form ${\hat F}_1\wedge {\hat F}_1$ implies the last
equation of \eqref{SUSYeqIIB} for $H=\frac{k_{\perp}}{k_{||}}
d(Im(\Omega))$.

Let us consider the complex 3-form $\hat \Psi_j = \hat \alpha_j
\wedge \hat \Omega$, $j=1,2$. Since $Re(\hat \Psi_j)=Re(\hat
\alpha_j)\wedge Re(\hat \Omega)- Im(\hat \alpha_j)\wedge Im(\hat
\Omega)$, we get that
$$
d(Re(\hat \Psi_j))= \frac{k_{||}}{k_{\perp}} Im(\hat \alpha_j)\wedge
H - \frac{1}{k_{||}} Re(\hat \alpha_j) \wedge d(Re(\Omega)_{\perp}).
$$
For $j=1$ we get the equation $Im(\alpha)\wedge H =
\frac{k_{\perp}}{k_{||}} Re(\alpha) \wedge d(Re(\Omega)_{\perp})$,
and for $j=2$ we get $Re(\alpha)\wedge H = -
\frac{k_{\perp}}{k_{||}} Im(\alpha) \wedge d(Re(\Omega)_{\perp})$,
so the equations \eqref{SUSYeqIIB} are satisfied.
\end{proof}

\begin{remark}\label{family}
{\rm Notice that in the conditions of Theorem \ref{IIB} we can
define a 1-parametric family of half-flat $SU(2)$ structures connecting the
two given structures. In fact, by considering the usual rotation
$$\hat \alpha_t= k_{||} \sin t\, Re (\alpha)+ k_{||} \cos t\, Im
(\alpha) +i \sin t\, Im (\alpha)- i\cos t\, Re(\alpha),$$ we have
that $(\hat F, \hat \alpha_t \wedge \hat \Omega)$ is half-flat
for any value of $t$. Notice that the fundamental 2-form $\hat F$
does not depend on $t$ because $Re (\hat \alpha_t)\wedge Im
(\hat \alpha_t)= k_{||} Re(\alpha) \wedge Im(\alpha)$. The almost
complex structure $\hat J_t$ is given with respect to the basis $\{
Re(\alpha), Im(\alpha) \}$ by
$$
\hat J_t=\left( \begin{array}{rl}
\sin t\,\cos t(\frac{1}{k_{||}}-k_{||}) & k_{||} \sin^2 t+\frac{\cos^2 t}{k_{||}} \\
 -\frac{\sin^2 t}{k_{||}} - k_{||}\cos^2 t & -\sin t\,\cos
 t(\frac{1}{k_{||}}-k_{||})
\end{array}\right)
$$}
\end{remark}

Next, using the characterization given in Theorem \ref{IIB}, we show
that if there is solution of the SUSY equations IIB for any
$k_{||}\in (0,1)$, then the manifold must be Hermitian balanced.

\begin{proposition}\label{balanced-1}
Let $(J, g, \alpha, \omega, \Omega)$ be a half-flat
$SU(2)$ structure  on a $6$-manifold  $M^6$ such that $d (\alpha) =0$ and for each $\lambda\in
(0,1)$ the $SU(2)$ structure $(J_{\lambda}, g_{\lambda},
\beta_{\lambda}= \lambda\, Im(\alpha)-i \frac{Re(\alpha)}{\lambda},
\omega, \Omega)$ is half-flat. Then, $(J, g, \alpha, \omega,
\Omega)$ is Hermitian balanced.
\end{proposition}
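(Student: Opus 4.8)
The plan is to exploit that the one-parameter family $(J_{\lambda},g_{\lambda},\beta_{\lambda},\omega,\Omega)$, $\lambda\in(0,1)$, has the \emph{same} fundamental $2$-form as the original $SU(2)$ structure, so that the half-flatness hypothesis on the family is, after removing the automatic part, a one–real–parameter family of closedness conditions on $Re(\beta_{\lambda}\wedge\Omega)$; extracting the coefficients in $\lambda$ and feeding the result back into the original half-flat and $d\alpha=0$ conditions then forces $d\Psi=0$.

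First I would record that, writing $\alpha=Re(\alpha)+i\,Im(\alpha)$ and $\beta_{\lambda}=\lambda\,Im(\alpha)-\tfrac{i}{\lambda}Re(\alpha)$, so that $Re(\beta_{\lambda})=\lambda\,Im(\alpha)$ and $Im(\beta_{\lambda})=-\tfrac1\lambda Re(\alpha)$, one has
$$\tfrac{i}{2}\,\beta_{\lambda}\wedge\overline{\beta_{\lambda}}=Re(\beta_{\lambda})\wedge Im(\beta_{\lambda})=\bigl(\lambda\,Im(\alpha)\bigr)\wedge\bigl(-\tfrac1\lambda Re(\alpha)\bigr)=Re(\alpha)\wedge Im(\alpha)=\tfrac{i}{2}\,\alpha\wedge\overline{\alpha}.$$
Hence the $SU(3)$ structure associated to $(J_{\lambda},g_{\lambda},\beta_{\lambda},\omega,\Omega)$ via \eqref{asocSU(3)} has fundamental form $F_{\lambda}=\omega+\tfrac{i}{2}\beta_{\lambda}\wedge\overline{\beta_{\lambda}}=F$, independent of $\lambda$. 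Consequently $d(F_{\lambda}\wedge F_{\lambda})=d(F\wedge F)=0$ holds automatically from the half-flatness of $(J,g,\alpha,\omega,\Omega)$, and the content of the hypothesis is precisely that $d\bigl(Re(\Psi_{\lambda})\bigr)=0$ for all $\lambda\in(0,1)$, where $\Psi_{\lambda}=\beta_{\lambda}\wedge\Omega$.

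Next I would expand
$$Re(\Psi_{\lambda})=Re(\beta_{\lambda})\wedge Re(\Omega)-Im(\beta_{\lambda})\wedge Im(\Omega)=\lambda\,Im(\alpha)\wedge Re(\Omega)+\tfrac1\lambda\,Re(\alpha)\wedge Im(\Omega),$$
so $d(Re(\Psi_{\lambda}))=0$ reads $\lambda\,d\bigl(Im(\alpha)\wedge Re(\Omega)\bigr)+\tfrac1\lambda\,d\bigl(Re(\alpha)\wedge Im(\Omega)\bigr)=0$ for every $\lambda\in(0,1)$. After multiplying by $\lambda$ this is a polynomial identity in $\lambda$ on an interval, so both coefficient forms vanish: $d\bigl(Im(\alpha)\wedge Re(\Omega)\bigr)=0$ and $d\bigl(Re(\alpha)\wedge Im(\Omega)\bigr)=0$. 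Since $d\alpha=0$, i.e.\ $d(Re(\alpha))=d(Im(\alpha))=0$, these give $Im(\alpha)\wedge d(Re(\Omega))=0$ and $Re(\alpha)\wedge d(Im(\Omega))=0$; and the half-flatness of the original structure, $d\bigl(Re(\alpha)\wedge Re(\Omega)-Im(\alpha)\wedge Im(\Omega)\bigr)=0$, together with $d\alpha=0$, gives $Re(\alpha)\wedge d(Re(\Omega))=Im(\alpha)\wedge d(Im(\Omega))$.

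Finally I would combine the three relations. Decomposing into real and imaginary parts,
$$\alpha\wedge d\Omega=\bigl(Re(\alpha)\wedge d(Re(\Omega))-Im(\alpha)\wedge d(Im(\Omega))\bigr)+i\bigl(Re(\alpha)\wedge d(Im(\Omega))+Im(\alpha)\wedge d(Re(\Omega))\bigr),$$
the imaginary part vanishes by the first two relations and the real part by the third, so $\alpha\wedge d\Omega=0$. As $d\alpha=0$, this gives $d\Psi=d(\alpha\wedge\Omega)=-\alpha\wedge d\Omega=0$, which together with $d(F\wedge F)=0$ is exactly the Hermitian balanced condition for $(J,g,\alpha,\omega,\Omega)$. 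The argument is purely algebraic, with no analytic obstacle; the one real idea is the reduction — via the $\lambda$-independence of $F_{\lambda}$ — to the family of conditions on $Re(\Psi_{\lambda})$ whose $\lambda$-coefficients must separately vanish, and the only thing requiring care afterwards is the sign bookkeeping in the real/imaginary decompositions and in moving $d$ across wedge products using $d\alpha=0$.
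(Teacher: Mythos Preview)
Your proof is correct, but it takes a different route from the paper's. Both begin by noting that $F_\lambda=F$, so the hypothesis reduces to $d\bigl(Re(\Phi_\lambda)\bigr)=0$ for all $\lambda\in(0,1)$, where $\Phi_\lambda=\beta_\lambda\wedge\Omega$ and $Re(\Phi_\lambda)=\tfrac{1}{\lambda}Re(\alpha)\wedge Im(\Omega)+\lambda\,Im(\alpha)\wedge Re(\Omega)$. At this point the paper simply observes that $\Phi_\lambda\to -i\Psi$ as $\lambda\to 1$, so by continuity $Im(\Psi)=\lim_{\lambda\to1}Re(\Phi_\lambda)$ is closed; together with the original half-flat condition ($Re(\Psi)$ closed and $d(F\wedge F)=0$) this is exactly the Hermitian balanced condition. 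You instead multiply by $\lambda$ and separate coefficients to get the \emph{two} relations $Im(\alpha)\wedge d(Re(\Omega))=0$ and $Re(\alpha)\wedge d(Im(\Omega))=0$ individually, then combine them with $d(Re(\Psi))=0$ to obtain $\alpha\wedge d\Omega=0$. The paper's limit argument is one step shorter and more direct; your coefficient-extraction argument yields strictly more information (each summand of $Im(\Psi)$ is closed, not just their sum), at the cost of having to feed the original half-flat equation back in to finish.
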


\begin{proof}
Let us consider the SU(3) structure $(F,\Psi)$ given by $F=\omega+ \frac{i}{2}\, \alpha\wedge \overline \alpha$ and
$\Psi=\alpha \wedge \Omega$. According to Definition~\ref{HB} we have to prove that $Im(\Psi)$ is a closed form.
Let $(F,\Phi_{\lambda})$ be the $SU(3)$ structure associated to $(J_{\lambda}, g_{\lambda},
\beta_{\lambda}, \omega, \Omega)$. Then,
the real and imaginary parts of the
forms $\Psi$ and $\Phi_{\lambda}$ are given by
$$
\begin{array}{l}
Re(\Psi)= Re(\alpha) \wedge Re(\Omega) - Im(\alpha) \wedge Im(\Omega),\\[5pt]
Im(\Psi)= Re(\alpha) \wedge Im(\Omega) + Im(\alpha) \wedge Re(\Omega),\\[5pt]
Re(\Phi_{\lambda})=\frac{Re(\alpha)}{\lambda} \wedge Im(\Omega) + \lambda \, Im(\alpha) \wedge Re(\Omega),\\[5pt]
Im(\Phi_{\lambda})= - \frac{Re(\alpha)}{\lambda} \wedge Re(\Omega) +\lambda \, Im(\alpha) \wedge Im(\Omega).
\end{array}
$$
The limit of the 3-form $\Phi_{\lambda}$ exists
when $\lambda \rightarrow 1$ and equals $-i\Psi$. Since $Re(\Phi_{\lambda})$ is closed
for any $\lambda\in (0,1)$, we conclude that $Im(\Psi)$ is closed.
Therefore, the $SU(2)$ structure $(J, g, \alpha, \omega, \Omega)$ is
Hermitian balanced.
\end{proof}

Given a Hermitian balanced $SU(2)$ structure $(J, g, \alpha, \omega,
\Omega)$ such that  $d (\alpha) =0$ and $Re(\alpha) \wedge d (Re
(\Omega)) =0$  one can construct a solution  of the SUSY equations
IIB for any $k_{||}\in (0,1)$.

\begin{proposition}\label{balanced}
Let $(J, g, \alpha, \omega, \Omega)$ be a Hermitian balanced
$SU(2)$ structure  on a $6$-manifold $M^6$ such that $d (\alpha) =0$ and $Re(\alpha) \wedge d
(Re(\Omega)) =0$, then  for each $\lambda\in (0,1)$ the
$SU(2)$ structure $(J_{\lambda}, g_{\lambda}, \beta_{\lambda}=
\lambda\, Im(\alpha)-i \frac{Re(\alpha)}{\lambda}, \omega,  i
\Omega)$ is half-flat.
\end{proposition}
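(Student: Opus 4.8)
The plan is to verify the two closedness conditions that define a half-flat $SU(2)$ structure: the $SU(3)$ structure associated to $(J_\lambda,g_\lambda,\beta_\lambda,\omega,i\Omega)$ via \eqref{asocSU(3)} must have closed $F_\lambda\wedge F_\lambda$ and closed $Re(\Psi_\lambda)$, where $F_\lambda=\omega+\frac{i}{2}\beta_\lambda\wedge\overline{\beta_\lambda}$ and $\Psi_\lambda=\beta_\lambda\wedge(i\Omega)$.

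First I would identify this structure with the one already used in Proposition~\ref{balanced-1}. Since $Re(\beta_\lambda)=\lambda\,Im(\alpha)$ and $Im(\beta_\lambda)=-\frac{1}{\lambda}Re(\alpha)$, one has
$$\frac{i}{2}\,\beta_\lambda\wedge\overline{\beta_\lambda}=Re(\beta_\lambda)\wedge Im(\beta_\lambda)=-Im(\alpha)\wedge Re(\alpha)=Re(\alpha)\wedge Im(\alpha)=\frac{i}{2}\,\alpha\wedge\overline\alpha,$$
so $F_\lambda=\omega+\frac{i}{2}\alpha\wedge\overline\alpha=F$, the fundamental form of the original structure, for every $\lambda$ (the same $\lambda$-independence noted in Remark~\ref{family} and in the proof of Theorem~\ref{IIB}). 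Moreover $\Psi_\lambda=\beta_\lambda\wedge(i\Omega)=i\,\Phi_\lambda$, where $\Phi_\lambda=\beta_\lambda\wedge\Omega$ is precisely the $(3,0)$-form of the $SU(3)$ structure $(F,\Phi_\lambda)$ attached to $(J_\lambda,g_\lambda,\beta_\lambda,\omega,\Omega)$ in the proof of Proposition~\ref{balanced-1}. Since $Re(i\,\Phi_\lambda)=-Im(\Phi_\lambda)$, half-flatness of $(J_\lambda,g_\lambda,\beta_\lambda,\omega,i\Omega)$ is thus equivalent to $d(F\wedge F)=0$ together with $d(Im(\Phi_\lambda))=0$.

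The first condition is exactly the first half of the Hermitian balanced hypothesis (Definition~\ref{HB}), so nothing is needed there. For the second I would use the formula $Im(\Phi_\lambda)=-\frac{1}{\lambda}Re(\alpha)\wedge Re(\Omega)+\lambda\,Im(\alpha)\wedge Im(\Omega)$ recorded in the proof of Proposition~\ref{balanced-1}. As $d\alpha=0$, both $Re(\alpha)$ and $Im(\alpha)$ are closed, hence
$$d(Im(\Phi_\lambda))=\frac{1}{\lambda}\,Re(\alpha)\wedge d(Re(\Omega))-\lambda\,Im(\alpha)\wedge d(Im(\Omega)).$$
Now the remaining part of the hypothesis, $d\Psi=0$ with $\Psi=\alpha\wedge\Omega$, gives upon taking real parts (and again using $d\alpha=0$) that $0=d(Re(\Psi))=-Re(\alpha)\wedge d(Re(\Omega))+Im(\alpha)\wedge d(Im(\Omega))$, i.e. $Im(\alpha)\wedge d(Im(\Omega))=Re(\alpha)\wedge d(Re(\Omega))$. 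Substituting,
$$d(Im(\Phi_\lambda))=\Big(\frac{1}{\lambda}-\lambda\Big)\,Re(\alpha)\wedge d(Re(\Omega)),$$
which vanishes by the hypothesis $Re(\alpha)\wedge d(Re(\Omega))=0$. Hence $(F,i\,\Phi_\lambda)$ is half-flat for every $\lambda\in(0,1)$, which is the claim (the $SU(2)$-structure status of $(J_\lambda,g_\lambda,\beta_\lambda,\omega,i\Omega)$ itself being the standard rotation construction already invoked in the paper).

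I do not expect a genuine obstacle: the argument is exterior algebra plus the Leibniz rule, and the only care required is the sign bookkeeping when splitting $\beta_\lambda\wedge(i\Omega)$ and $\alpha\wedge\Omega$ into real and imaginary $3$-forms and when differentiating them. Conceptually the two points are (i) that $F_\lambda$ does not depend on $\lambda$ and equals $F$, so the $F\wedge F$-closedness is inherited for free from the Hermitian balanced hypothesis, and (ii) that $d\Psi=0$ together with $d\alpha=0$ forces $Re(\alpha)\wedge d(Re(\Omega))=Im(\alpha)\wedge d(Im(\Omega))$, so the extra hypothesis $Re(\alpha)\wedge d(Re(\Omega))=0$ annihilates $d(Im(\Phi_\lambda))$. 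Note that only the real part of $d\Psi=0$ is used; this is the analogue, in the converse direction, of the step in the proof of Proposition~\ref{balanced-1}.
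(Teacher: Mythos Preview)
Your proof is correct and follows essentially the same route as the paper's: you compute $Re(\Psi_\lambda)$ (via your identification $Re(\Psi_\lambda)=-Im(\Phi_\lambda)$, which yields exactly the paper's expression $\frac{Re(\alpha)}{\lambda}\wedge Re(\Omega)-\lambda\,Im(\alpha)\wedge Im(\Omega)$), then use $d(Re(\Psi))=0$ to identify $Im(\alpha)\wedge d(Im(\Omega))$ with $Re(\alpha)\wedge d(Re(\Omega))$ and apply the extra hypothesis to kill both terms. The paper's proof is terser---it simply asserts that the assumptions give $Re(\alpha)\wedge d(Re(\Omega))=Im(\alpha)\wedge d(Im(\Omega))=0$---and it omits the verification that $F_\lambda=F$ so that $d(F\wedge F)=0$ is inherited, which you spell out explicitly; but the argument is the same.
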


\begin{proof}
We have  $\Psi_{\lambda} =  ( \lambda\, Im(\alpha)-i
\frac{Re(\alpha)}{\lambda}) \wedge (i Re(\Omega) - Im (\Omega))$ and
thus
$$
\begin{array}{l}
Re(\Psi_{\lambda})=- \lambda \, Im(\alpha) \wedge Im(\Omega) + \frac{Re(\alpha)}{\lambda} \wedge Re(\Omega),\\[5pt]
Im(\Psi_{\lambda})=\lambda \, Im(\alpha) \wedge Re(\Omega) + \frac{Re(\alpha)}{\lambda} \wedge Im(\Omega).\\[5pt]
\end{array}
$$
By the assumptions on the $SU(2)$ structure $(J, g, \alpha, \omega,
\Omega)$ we get in particular that
$$
\begin{array}{l}
Re(\alpha) \wedge d (Re(\Omega)) = Im (\alpha) \wedge d
(Im(\Omega))=0
\end{array}
$$
and therefore that $d(Re(\Psi_{\lambda})) =0$.
\end{proof}

\begin{example}\label{h4}
{\rm
Let us consider a nilmanifold corresponding to the nilpotent Lie algebra
$\frh_4=(0,0,0,0,12,14+23)$, that is, there is a basis $\{e^1,\ldots,e^6\}$ satisfying
$$
de^1=de^2=de^3=de^4=0,\quad de^5=e^{12}\quad de^6=e^{14}+e^{23}.
$$
We consider the structure $(F,\Psi)$ given by the 2-form
$F=e^{13}+e^{24}-e^{56}$ and the (3,0)-form $\Psi=(e^{1}+i\, e^{3})(e^{2}+i\,
e^{4})(e^{6}+i\, e^{5})$. Although $\frh_4$ admits Hermitian
balanced structures, the previous structure is only half-flat. In
fact, $F^2$ and $Re(\Psi)$ are closed, but $d(Im(\Psi))=-e^{1234}$.

For the complex 3-form $\Phi_{\lambda}= (\lambda
e^{3}-i\, \frac{e^{1}}{\lambda})(e^{2}+i\, e^{4})(e^{6}+i\, e^{5})$,
a direct calculation shows that
$$
d(Re(\Phi_{\lambda}))= \frac{1}{\lambda} d(e^{125}+e^{146}) +
\lambda\, d(e^{326}-e^{345}) = \frac{2 \lambda^2-1}{\lambda}
e^{1234},
$$
which implies that $Re(\Phi_{\lambda})$ is closed only for
$\lambda=\pm\frac{1}{\sqrt{2}}$. Notice that $Im(\Phi_{\lambda})$ is
closed for any $\lambda$, so $(F,\Phi_{\pm\frac{1}{\sqrt{2}}})$ are
Hermitian balanced and $(F,\Phi_{\frac{1}{\sqrt{2}}})$ provides a
solution to equations \eqref{SUSYeqIIB}. In fact, by
Theorem~\ref{IIB} we have the following explicit solution $(\alpha,
Re (\Omega)_{||}, Re (\Omega)_{\perp}, Im (\Omega))$ of the SUSY
equations IIB for $k_{||}=k_{\perp}=\frac{1}{\sqrt{2}}$:
$$
Re(\alpha)= \sqrt{2}\, e^1,\quad\ Im(\alpha)=e^3,\quad\ Re
(\Omega)_{||}= \frac{1}{\sqrt{2}} (e^{24}-e^{56}),$$

$$Re (\Omega)_{\perp}= \frac{1}{\sqrt{2}} (e^{26}- e^{45}),\quad\quad Im
(\Omega)= e^{25}+ e^{46}.$$ Notice that the fluxes are:
$$H=-e^{234},\quad\ e^{i\theta}g_s*F_3= -e^{126}+e^{145}+e^{235},\quad\  e^{i\theta}g_s*F_1=
e^{23456}.$$

Now, let us start from the (Hermitian balanced) structure
$\Phi_{\lambda}$ for $\lambda=-\frac{1}{\sqrt{2}}$, that is,
$$
\Phi_{-\frac{1}{\sqrt{2}}}= (-\frac{e^{3}}{\sqrt{2}}+i\sqrt{2}\,
e^{1})(e^{2}+i\, e^{4})(e^{6}+i\, e^{5}),
$$
and consider the complex 3-form $\Theta_{\mu}$ given by
$$
\Theta_{\mu}=(\sqrt{2}\mu\, e^{1} + i
\frac{e^{3}}{\sqrt{2}\mu})(e^{2}+i\, e^{4})(e^{6}+i\, e^{5}).
$$
It is easy to check that $Re(\Theta_{\mu})$ is closed for any value
of $\mu$. Notice that in particular
$\Theta_{\frac{1}{\sqrt{2}}}=\Psi$, i.e. one member in the family
is precisely the half-flat structure $(F,\Psi)$ given at the beginning.
Again, by Theorem~\ref{IIB} we have the following solution $(\alpha,
Re (\Omega)_{||}, Re (\Omega)_{\perp}, Im (\Omega))$ of the SUSY
equations IIB for any $k_{||}=\mu\in (0,1)$:
$$
Re(\alpha)= -\frac{1}{\sqrt{2}\, k_{||}}\, e^3,\quad\ Im(\alpha)=
\sqrt{2}\, e^1,\quad\ Re (\Omega)_{||}= k_{\perp} (e^{24}-e^{56}),$$

$$Re (\Omega)_{\perp}= k_{||} (e^{26}- e^{45}),\quad\quad Im
(\Omega)= e^{25}+ e^{46},$$ where $k_{\perp}=\sqrt{1-k_{||}^2}$. The
fluxes are:
$$H= -\frac{k_{\perp}}{k_{||}} e^{234},\quad\ e^{i\theta}g_s*F_3= -e^{126}+e^{145}+e^{235},\quad\  e^{i\theta}g_s*F_1=
\frac{k_{\perp}}{k_{||}} e^{23456}.$$
}
\end{example}

From Theorem~\ref{IIA} it follows that a compact 6-manifold $M^6$
admitting a solution to equations \eqref{SUSYeqIIA} satisfies those
topological restrictions imposed by the existence of a symplectic
form, in particular the Betti numbers $b_2(M^6)$ and $b_4(M^6)$ do not vanish.
Next we prove that $b_1(M^6)\geq 2$ for any compact manifold $M^6$
admitting solution to \eqref{SUSYeqIIB}. In
Examples~\ref{solvable-1} and~\ref{solvable-2} we show that this
lower bound can be attained.

\begin{proposition} Let $(M^6, J, g, \alpha, \omega, \Omega)$ be a $6$-dimensional compact
manifold endowed with an $SU(2)$ structure such that the $2$-forms
$Re(\Omega)_{||}, Re(\Omega)_{\perp}, Im(\Omega)$ satisfy the
equations \eqref{SUSYeqIIB}, then $M^6$ has first Betti number $\geq
2$. In particular, there is no solution on compact simply connected
$6$-dimensional manifolds.
\end{proposition}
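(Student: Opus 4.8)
The plan is to extract from \eqref{SUSYeqIIB} two closed $1$-forms coming from $\alpha$ and to show that they are linearly independent in de Rham cohomology, which immediately gives $b_1(M^6)\geq 2$. The first two equations of \eqref{SUSYeqIIB} read $d(Re(\alpha))=0$ and $d(Im(\alpha))=0$, so $Re(\alpha)$ and $Im(\alpha)$ are closed real $1$-forms on $M^6$, and hence so is every real linear combination $a\,Re(\alpha)+b\,Im(\alpha)$ with $a,b\in\R$.

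Next I would use the $SU(2)$-structure axioms to observe that $Re(\alpha)$ and $Im(\alpha)$ are pointwise linearly independent. Indeed $\alpha$ is a $(1,0)$-form with $\|\alpha\|^2=i_{\overline\alpha}\alpha=2$, so it is nowhere vanishing, and since the real and imaginary parts of a $(1,0)$-form are interchanged up to sign by the almost complex structure $J$, a relation $a\,Re(\alpha)_p+b\,Im(\alpha)_p=0$ at a point $p$ yields, after composing with $J_p$, the second relation $b\,Re(\alpha)_p-a\,Im(\alpha)_p=0$; the two together force $Re(\alpha)_p=Im(\alpha)_p=0$, that is $\alpha_p=0$, unless $a=b=0$. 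Therefore every nontrivial real combination $a\,Re(\alpha)+b\,Im(\alpha)$ is a nowhere-vanishing closed $1$-form on the compact manifold $M^6$.

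Finally, a nowhere-vanishing closed $1$-form on a compact manifold cannot be exact: if it equalled $df$, the smooth function $f$ would attain a maximum at some point, where $df=0$, contradicting the nonvanishing. Applying this to all combinations $a\,Re(\alpha)+b\,Im(\alpha)$ shows that no nontrivial such combination is a coboundary, i.e. the classes $[Re(\alpha)]$ and $[Im(\alpha)]$ span a $2$-dimensional subspace of $H^1_{dR}(M^6;\R)$, whence $b_1(M^6)\geq 2$. In particular, a compact simply connected $6$-manifold has $b_1=0<2$, so it cannot carry an $SU(2)$ structure whose forms solve \eqref{SUSYeqIIB}.

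The computation is short; the only point requiring a little care is the pointwise linear independence of $Re(\alpha)$ and $Im(\alpha)$, which genuinely uses that $\alpha$ has type $(1,0)$ — an arbitrary nowhere-vanishing complex $1$-form need not have pointwise independent real and imaginary parts — together with keeping track of the sign convention relating $Im(\alpha)$ to $Re(\alpha)\circ J$ (the argument is in fact convention-independent). Everything else is the classical fact that a compact manifold admits no nowhere-zero exact $1$-form.
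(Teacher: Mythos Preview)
Your proof is correct, and it takes a different route from the paper's.  The paper shows that $Re(\alpha)$ and $Im(\alpha)$ are \emph{harmonic}: using the last equation of \eqref{SUSYeqIIB} together with the $SU(2)$-structure identities, it observes that $*Re(\alpha)$ is a constant multiple of $Im(\alpha)\wedge (Im(\Omega))^2$, which is closed, and similarly for $*Im(\alpha)$; Hodge theory then gives two independent classes in $H^1$.  Your argument, by contrast, uses only the first two equations of \eqref{SUSYeqIIB} and the pointwise independence of $Re(\alpha)$ and $Im(\alpha)$ (coming from the $(1,0)$ type of $\alpha$), together with the elementary fact that a nowhere-vanishing closed $1$-form on a compact manifold is never exact.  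Your approach is lighter --- it avoids the Hodge theorem and does not touch the remaining SUSY equations --- while the paper's approach has the minor bonus of actually exhibiting harmonic representatives.  Both reach the same conclusion with no gaps.
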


\begin{proof} From equations \eqref{SUSYeqIIB} we can prove that the closed
1-forms $Re(\alpha)$ and $Im(\alpha)$ are harmonic with respect to
$g$. In fact, the $5$-form $* Re(\alpha)$ is closed because it is a
(constant) multiple of $Im(\alpha) \wedge (Im(\Omega))^2$, which is
closed by the last equation of  \eqref{SUSYeqIIB}, taking into
account the value of $H$. Similarly,  the $5$-form $* Im(\alpha)$ is
also closed.
\end{proof}

Let us remind that a Riemannian manifold $(N,g)$ is called {\it
hyperk\"ahler} if there are three complex structures, I, J, K on $N$
satisfying the quaternion relations $$I^2=J^2=K^2=-1, \quad
IJ=K=-JI,$$ and such that $I,J,K$ are parallel. In particular, we
have three K\"ahler forms $\omega_I$, $\omega_J$ and $\omega_K$ on
$N$.

\begin{proposition}\label{HK}
Let $(N^4,I,J,K)$ be a compact 4-dimensional hyperk\"ahler manifold.
Then, on the 6-dimensional manifold $M^6=N^4 \times \mathbb{T}^2$ there
exist solutions to the SUSY equations IIA and IIB. More precisely,
if $\beta^1,\beta^2$ is a basis of 1-forms on the torus
$\mathbb{T}^2$, then $(\alpha, Re (\Omega)_{||}, Re
(\Omega)_{\perp}, Im (\Omega))$ given by
$$
Re(\alpha)= \beta^1,\ \ Im(\alpha)= \frac{1}{k_{||}} \beta^2,\ \ Re
(\Omega)_{||}= -k_{||}\, \omega_J,\ \ Re (\Omega)_{\perp}=
k_{\perp}\, \omega_K,\ \ Im (\Omega)= \omega_I,
$$
solves equations \eqref{SUSYeqIIA}, and $(\alpha, Re (\Omega)_{||},
Re (\Omega)_{\perp}, Im (\Omega))$ given by
$$
Re(\alpha)= \frac{1}{k_{||}} \beta^1,\ \ Im(\alpha)= \beta^2,\ \ Re
(\Omega)_{||}= k_{\perp}\, \omega_K,\ \ Re (\Omega)_{\perp}=
k_{||}\, \omega_I,\ \ Im (\Omega)= \omega_J,
$$
are solutions to equations \eqref{SUSYeqIIB}.
\end{proposition}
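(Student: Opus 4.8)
The plan is to deduce both claims from the converse halves of Theorems~\ref{IIA} and~\ref{IIB}, by exhibiting explicitly the relevant (symplectic) half-flat $SU(2)$ structure on $M^6=N^4\times\mathbb{T}^2$. First I would write down the natural $SU(2)$ structure of the product: take a parallel coframe $\beta^1,\beta^2$ of the flat two-torus, normalized so that $\|\beta^1+i\beta^2\|^2=2$, and note that the hyperk\"ahler triple already obeys $\omega_I^2=\omega_J^2=\omega_K^2\neq0$ and $\omega_I\wedge\omega_J=\omega_J\wedge\omega_K=\omega_K\wedge\omega_I=0$, which is exactly the normalization required of an $SU(2)$ structure in dimension four. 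Then $\hat\omega:=\omega_K$, $\hat\Omega:=\omega_I+i\,\omega_J$, $\hat\alpha:=\beta^1+i\,\beta^2$ form an $SU(2)$ structure on $M^6$: the conditions $\hat\omega^2=\tfrac12\hat\Omega\wedge\overline{\hat\Omega}\neq0$ and $\hat\omega\wedge\hat\Omega=\hat\Omega\wedge\hat\Omega=0$ follow from the hyperk\"ahler relations, while $i_{\hat\alpha}\hat\omega=i_{\hat\alpha}\hat\Omega=0$ holds because $\hat\alpha$ only involves the $\mathbb{T}^2$-directions and $\hat\omega,\hat\Omega$ live on $N^4$ (cf.\ the adapted-basis description in \cite{CS}). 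The associated $SU(3)$ structure has $\hat F=\hat\omega+\tfrac{i}{2}\hat\alpha\wedge\overline{\hat\alpha}=\omega_K+\beta^1\wedge\beta^2$ and $\hat\Psi=\hat\alpha\wedge\hat\Omega=(\beta^1+i\beta^2)\wedge(\omega_I+i\omega_J)$. Since $\omega_I,\omega_J,\omega_K$ are closed (they are the K\"ahler forms of the hyperk\"ahler metric) and $\beta^1,\beta^2$ are closed, one reads off at once $d\hat\alpha=0$, $d\hat F=0$ and $d(Re(\hat\Psi))=d(\beta^1\wedge\omega_I-\beta^2\wedge\omega_J)=0$; hence $(\hat J,\hat g,\hat\alpha,\hat\omega,\hat\Omega)$ is a symplectic half-flat $SU(2)$ structure with $d(Re(\hat\alpha))=0$.

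For type IIA I would then simply match the proposed data to this structure through the dictionary of the converse of Theorem~\ref{IIA}, namely $\tfrac1{k_\perp}Re(\Omega)_\perp=\hat\omega$, $Im(\Omega)-\tfrac{i}{k_{||}}Re(\Omega)_{||}=\hat\Omega$, $Re(\alpha)+i\,k_{||}Im(\alpha)=\hat\alpha$: substituting $Re(\alpha)=\beta^1$, $Im(\alpha)=\tfrac1{k_{||}}\beta^2$, $Re(\Omega)_{||}=-k_{||}\omega_J$, $Re(\Omega)_\perp=k_\perp\omega_K$, $Im(\Omega)=\omega_I$ gives back exactly $\hat\omega=\omega_K$, $\hat\Omega=\omega_I+i\omega_J$, $\hat\alpha=\beta^1+i\beta^2$, so Theorem~\ref{IIA} guarantees these forms solve \eqref{SUSYeqIIA}. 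For type IIB I would invoke the converse of Theorem~\ref{IIB} with $\hat\alpha_1=\beta^1+i\beta^2$, $\hat\omega=\omega_K$, $\hat\Omega=\omega_I+i\omega_J$, which is again the data of the proposition under the IIB dictionary $\tfrac1{k_\perp}Re(\Omega)_{||}=\hat\omega$, $\tfrac1{k_{||}}Re(\Omega)_\perp+i\,Im(\Omega)=\hat\Omega$, $k_{||}Re(\alpha)+i\,Im(\alpha)=\hat\alpha_1$ applied to $Re(\alpha)=\tfrac1{k_{||}}\beta^1$, $Im(\alpha)=\beta^2$, $Re(\Omega)_{||}=k_\perp\omega_K$, $Re(\Omega)_\perp=k_{||}\omega_I$, $Im(\Omega)=\omega_J$. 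The structure $(\hat J_1,\hat g_1,\hat\alpha_1,\hat\omega,\hat\Omega)$ is half-flat with $d\hat\alpha_1=0$ by the computation above, and for every $k_{||}\in(0,1)$ the rotated $1$-form $\hat\alpha_2=k_{||}Im(\hat\alpha_1)-i\,\tfrac{Re(\hat\alpha_1)}{k_{||}}=k_{||}\beta^2-\tfrac{i}{k_{||}}\beta^1$ still satisfies $Re(\hat\alpha_2)\wedge Im(\hat\alpha_2)=\beta^1\wedge\beta^2$, so $\hat F_2=\hat\omega+\beta^1\wedge\beta^2=\hat F_1$ is closed, while $Re(\hat\Psi_2)=Re(\hat\alpha_2\wedge\hat\Omega)=k_{||}\,\beta^2\wedge\omega_I+\tfrac1{k_{||}}\,\beta^1\wedge\omega_J$ is closed; thus $(\hat J_2,\hat g_2,\hat\alpha_2,\hat\omega,\hat\Omega)$ is half-flat for all $k_{||}\in(0,1)$, and the converse of Theorem~\ref{IIB} yields the stated solution of \eqref{SUSYeqIIB} for every such $k_{||}$.

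The only delicate point is bookkeeping rather than analysis: one must fix the scaling of the flat coframe (and check that the hyperk\"ahler normalization already matches it) so that $\hat\omega^2=\tfrac12\hat\Omega\wedge\overline{\hat\Omega}$ and $\|\hat\alpha\|^2=2$ hold simultaneously, and one must keep careful track of which combinations are the real and imaginary parts of $\hat\alpha_j$ and $\hat\Omega$ so that the signs in $Re(\hat\Psi_j)$ come out correctly. Everything else is the direct substitution indicated above together with the closedness of the K\"ahler forms and of the flat coframe; compactness of $N^4$ plays no role in the construction and is recorded only so that $M^6$ is a genuine compact example.
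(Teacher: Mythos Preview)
Your proof is correct and follows essentially the same approach as the paper's: exhibit the $SU(2)$ structure $\hat\alpha=\beta^1+i\beta^2$, $\hat\omega=\omega_K$, $\hat\Omega=\omega_I+i\omega_J$ on $N^4\times\mathbb{T}^2$, observe it is symplectic half-flat (the paper just says the associated $SU(3)$ structure is integrable, which is stronger but immediate), and then apply the converses of Theorems~\ref{IIA} and~\ref{IIB}. Your dictionary checks and the verification that the rotated structure $(\hat\alpha_2,\hat\omega,\hat\Omega)$ remains half-flat are exactly what the paper does, only spelled out in more detail.
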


\begin{proof}
Let us consider $\hat \alpha_1=\beta^1+i\, \beta^2$, $\hat
\omega=\omega_K$ and $\hat \Omega=\omega_I+i\, \omega_J$. Since the
SU(3) structure $(\hat F=\beta^{12}+\omega_K,\ \hat \Psi=\hat
\alpha_1\wedge \hat \Omega)$ is integrable, the SU(2) structure
$(\hat J_1, \hat g_1, \hat \alpha_1, \hat \omega, \hat \Omega)$ is
obviously symplectic half-flat. Moreover, for any $k_{||}\in (0,1)$
the $SU(2)$ structure $(\hat J_2, \hat g_2, \hat \alpha_2= k_{||}\,
\beta^2 - i \frac{\beta^1}{k_{||}}, \hat \omega, \hat \Omega)$ is
also half-flat and then the result follows from Theorems~\ref{IIA}
and~\ref{IIB}.
\end{proof}

Notice that any compact hyperk\"ahler surface is either a
complex torus with a flat metric or a K3-surface with Calabi-Yau metric \cite{Besse}. Also observe that for the
solutions given in this proposition all the fluxes vanish.

Next we generalize the previous proposition by means of hypo
structures on 5-manifolds. We recall that an $SU(2)$ structure on a
5-manifold $P^5$ is an $SU(2)$-reduction of the principal bundle of
linear frames on $P$, equivalently a triple $(\eta,\omega_1,\Phi)$,
where $\eta$ is a $1$-form, $\omega_1$ is a $2$-form and $\Phi
=\omega_2+i\,\omega_3$ is a complex $2$-form on $P$ such that
$$
\eta\wedge\omega_1\wedge\omega_1 \neq 0\,, \quad\quad \Phi^2=0\,,
\quad\quad \omega_1\wedge\Phi =0\,, \quad\quad
\Phi\wedge\overline{\Phi} =2\,\omega_1\wedge\omega_1\,,
$$
and $\Phi$ is of type $(2,0)$ with respect to $\omega_1$.
Following~\cite{CS}, a $\SU(2)$ structure on a 5-manifold $P^5$ is said to be {\em hypo}
if $d\omega_1=d(\omega_2\wedge\eta)=d(\omega_3\wedge\eta)=0$.

\begin{proposition}\label{hypo}
Let $(P^5,\eta,\omega_1,\omega_2,\omega_3)$ be a compact
5-dimensional manifold endowed with a hypo SU(2) structure such that
$d\eta=0=d\omega_2$. Then, on the 6-dimensional manifold $M^6=P^5 \times
S^1$ there exist solutions to the SUSY equations IIA and IIB. More
precisely, if $\beta$ is a global nonvanishing 1-form on $S^1$ then
$(\alpha, Re (\Omega)_{||}, Re (\Omega)_{\perp}, Im (\Omega))$ given
by
$$
Re(\alpha)= \beta,\ \ Im(\alpha)= \frac{1}{k_{||}} \eta,\ \ Re
(\Omega)_{||}= -k_{||}\, \omega_3,\ \ Re (\Omega)_{\perp}=
k_{\perp}\, \omega_1,\ \ Im (\Omega)= \omega_2,
$$
solves equations \eqref{SUSYeqIIA}, and $(\alpha, Re (\Omega)_{||},
Re (\Omega)_{\perp}, Im (\Omega))$ given by
$$
Re(\alpha)= \frac{1}{k_{||}} \beta,\ \ Im(\alpha)= \eta,\ \ Re
(\Omega)_{||}= k_{\perp}\, \omega_3,\ \ Re (\Omega)_{\perp}=
k_{||}\, \omega_1,\ \ Im (\Omega)= \omega_2,
$$
are solutions to equations \eqref{SUSYeqIIB}.
\end{proposition}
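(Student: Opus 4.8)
The plan is to follow the pattern of Proposition~\ref{HK}, which this statement generalizes: Proposition~\ref{hypo} applied to $P^5=N^4\times S^1$ (with $\eta$ the $S^1$-form and $\omega_1,\omega_2,\omega_3$ the three K\"ahler forms of $N^4$, pulled back) recovers Proposition~\ref{HK}. So I would exhibit on $M^6=P^5\times S^1$ the rotated $SU(2)$/$SU(3)$ structures whose closedness properties are precisely the hypotheses of the converse parts of Theorems~\ref{IIA} and~\ref{IIB}, and then read off the stated solutions. Throughout I would use the pointwise algebraic identities forced by an $SU(2)$ structure on a $5$-manifold --- $\omega_i\wedge\omega_j=0$ for $i\neq j$, $\omega_1^2=\omega_2^2=\omega_3^2$, and the $\omega_i$ annihilate the vector dual to $\eta$ --- all of which come from $\Phi^2=0$, $\omega_1\wedge\Phi=0$, $\Phi\wedge\overline\Phi=2\,\omega_1\wedge\omega_1$, together with $d\beta=0$ on $S^1$ and the standing assumptions $d\omega_1=d\omega_2=d\eta=0$ and $d(\omega_3\wedge\eta)=0$.

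\emph{Type IIA.} I set $\hat\alpha=\beta+i\,\eta$, $\hat\omega=\omega_1$, $\hat\Omega=\omega_2+i\,\omega_3$. The identities above show $(\hat\alpha,\hat\omega,\hat\Omega)$ is an $SU(2)$ structure on $M^6$, with associated $SU(3)$ structure $\hat F=\omega_1+\beta\wedge\eta$ and $\hat\Psi=(\beta+i\,\eta)\wedge(\omega_2+i\,\omega_3)$. Then $d\hat F=d\omega_1=0$, so the structure is symplectic; and $d(Re(\hat\Psi))=d(\beta\wedge\omega_2-\eta\wedge\omega_3)=\eta\wedge d\omega_3$, which vanishes since $0=d(\omega_3\wedge\eta)=d\omega_3\wedge\eta$ (using $d\eta=0$). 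Since also $d(Re(\hat\alpha))=d\beta=0$, the structure $(\hat J,\hat g,\hat\alpha,\hat\omega,\hat\Omega)$ is symplectic half-flat with $d(Re(\hat\alpha))=0$, and the converse part of Theorem~\ref{IIA} applies. Unwinding the correspondence $\frac1{k_\perp}Re(\Omega)_\perp=\hat\omega$, $Im(\Omega)-\frac{i}{k_{||}}Re(\Omega)_{||}=\hat\Omega$, $Re(\alpha)+i\,k_{||}Im(\alpha)=\hat\alpha$ reproduces exactly the IIA forms listed in the statement.

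\emph{Type IIB.} I set $\hat\alpha_1=\beta+i\,\eta$, $\hat\omega=\omega_3$, $\hat\Omega=\omega_1+i\,\omega_2$, giving $\hat F=\omega_3+\beta\wedge\eta$ and $\hat\Psi_1=(\beta+i\,\eta)\wedge(\omega_1+i\,\omega_2)$. Since $\hat F\wedge\hat F=\omega_3^2+2\,\omega_3\wedge\beta\wedge\eta$, one has $d(\hat F\wedge\hat F)=2\,\omega_3\wedge d\omega_3+2\,d\omega_3\wedge\beta\wedge\eta$; the second term vanishes because $d\omega_3\wedge\eta=0$, and the first vanishes because $\omega_3^2=\omega_1^2$ gives $d(\omega_3^2)=d(\omega_1^2)=0$. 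Moreover $d(Re(\hat\Psi_1))=d(\beta\wedge\omega_1-\eta\wedge\omega_2)=0$ and $d\hat\alpha_1=0$, so $(\hat J_1,\hat g_1,\hat\alpha_1,\hat\omega,\hat\Omega)$ is half-flat with $d\hat\alpha_1=0$. Finally, for every $k_{||}\in(0,1)$ the rotated form $\hat\alpha_2=k_{||}Im(\hat\alpha_1)-i\,k_{||}^{-1}Re(\hat\alpha_1)=k_{||}\eta-i\,k_{||}^{-1}\beta$ satisfies $Re(\hat\alpha_2)\wedge Im(\hat\alpha_2)=Re(\hat\alpha_1)\wedge Im(\hat\alpha_1)=\beta\wedge\eta$, so its fundamental form is again $\hat F$ (hence $d(\hat F\wedge\hat F)=0$), and $d(Re(\hat\alpha_2\wedge\hat\Omega))=d(k_{||}\eta\wedge\omega_1+k_{||}^{-1}\beta\wedge\omega_2)=0$; thus $(\hat J_2,\hat g_2,\hat\alpha_2,\hat\omega,\hat\Omega)$ is half-flat for all $k_{||}\in(0,1)$. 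The converse part of Theorem~\ref{IIB}, via $\frac1{k_\perp}Re(\Omega)_{||}=\hat\omega$, $\frac1{k_{||}}Re(\Omega)_\perp+i\,Im(\Omega)=\hat\Omega$, $k_{||}Re(\alpha)+i\,Im(\alpha)=\hat\alpha_1$, then yields precisely the IIB solution in the statement.

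The bulk of the argument is formal bookkeeping with wedge products on the product $P^5\times S^1$; the only two non-formal points are the vanishings $d\omega_3\wedge\eta=0$ and $\omega_3\wedge d\omega_3=0$. The first is immediate from the hypo condition together with $d\eta=0$. The second is the one place where the genuine $SU(2)$-structure identity $\omega_1^2=\omega_3^2$ (not merely the hypo equations) enters, since it lets one replace $d(\omega_3^2)$ by $d(\omega_1^2)=0$; I expect this to be the main obstacle, while everything else follows mechanically once the correct rotated data $\hat\alpha=\beta+i\eta$, $\hat\omega\in\{\omega_1,\omega_3\}$, $\hat\Omega=\omega_j+i\omega_k$ have been matched to the inputs of Theorems~\ref{IIA} and~\ref{IIB}.
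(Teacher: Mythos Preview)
Your proof is correct and follows exactly the same approach as the paper's own proof: define $(\hat\alpha,\hat\omega,\hat\Omega)=(\beta+i\eta,\omega_1,\omega_2+i\omega_3)$ for IIA and $(\hat\alpha_1,\hat\omega,\hat\Omega)=(\beta+i\eta,\omega_3,\omega_1+i\omega_2)$ for IIB, verify the (symplectic) half-flat conditions and the half-flatness of the rotated structure $\hat\alpha_2$, then invoke the converse parts of Theorems~\ref{IIA} and~\ref{IIB}. The paper is considerably terser (it simply asserts that the relevant structures are half-flat), whereas you have spelled out the verifications, including the two nontrivial points $d\omega_3\wedge\eta=0$ and $\omega_3\wedge d\omega_3=\tfrac12\,d(\omega_3^2)=\tfrac12\,d(\omega_1^2)=0$, which are indeed the only places where the hypo hypothesis and the $SU(2)$ identity $\omega_1^2=\omega_3^2$ are genuinely used.
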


\begin{proof}
It is clear that the SU(2) structure $(\hat J, \hat g, \hat
\alpha=\beta+i\, \eta, \hat \omega=\omega_1, \hat
\Omega=\omega_2+i\, \omega_3)$ on $M$ is symplectic half-flat. On
the other hand, the SU(2) structure $(\hat J_1, \hat g_1, \hat
\alpha_1=\beta+i\, \eta, \hat \omega=\omega_3, \hat
\Omega=\omega_1+i\, \omega_2)$ on $M$ is half-flat and, for any
$k_{||}\in (0,1)$, the $SU(2)$ structure $(\hat J_2, \hat g_2, \hat
\alpha_2= k_{||}\, \eta - i \frac{\beta}{k_{||}}, \hat \omega, \hat
\Omega)$ is also half-flat. Therefore, the result follows from
Theorems~\ref{IIA} and~\ref{IIB}.
\end{proof}

It is obvious that given a compact hyperk\"ahler 4-manifold $N^4$ we
can consider $P^5=N^4\times S^1$, but there are other manifolds to which
this result can be applied. For example, a nilmanifold corresponding
to the Lie algebra $(0,0,0,12,13)$ with the hypo structure
$\eta=e^1$, $\omega_1=e^{24}-e^{35}$, $\omega_2=e^{25}+e^{34}$ and
$\omega_3=e^{23}+e^{45}$. We will treat this example in more detail
in Section~\ref{Nilmanifolds}.

\section{New explicit solutions of the SUSY equations IIA and IIB}\label{new-examples}

In this section we show compact solvmanifolds admitting structures
solving the SUSY equations IIA and IIB. From Theorem~\ref{IIA} and
Proposition~\ref{balanced} we consider compact 6-solvmanifolds
admitting
both symplectic half-flat and Hermitian balanced $SU(3)$ structures.

\subsection{Nilmanifolds}\label{Nilmanifolds}

Conti and Tomassini classified \cite{CS} the nilmanifolds admitting
invariant symplectic half-flat  structures. It turns out that the
underlying nilpotent Lie algebra must be isomorphic to the abelian
Lie algebra, $\frh_6=(0,0,0,0,12,13)$ or $(0,0,0,12,13,23)$. Apart
from the abelian Lie algebra, only $\frh_6$ admits Hermitian balanced structure \cite{U}.
In fact, up to equivalence, there is a
1-parametric family of Hermitian balanced structures, which are
described as follows (see \cite{FIUV} for details). The complex
equations
$$
d\omega^1=d\omega^2=0, \quad d\omega^3 = \omega^{12} - \omega^{2\bar
1},
$$
define a complex structure $J$ on the Lie algebra $\frh_6$ and any
complex structure on $\frh_6$ is equivalent to $J$. With respect to $J$, any
Hermitian balanced structure
is equivalent to one and only one of the
form
$$
F_t=\frac{i}{2} (\omega^{1\bar{1}} + \omega^{2\bar{2}} +
t^2\,\omega^{3\bar{3}}),
$$
for some $t\not=0$.

Let us consider the basis of 1-forms $\{\beta^1,\ldots,\beta^6\}$
given by
$$
\beta^1+i\, \beta^4=\omega^1,\quad \beta^2+i\,
\beta^3=\omega^2,\quad \beta^5+i\, \beta^6= \frac{1}{2} \omega^3.
$$
In terms of this basis, we have the structure equations
\begin{equation}\label{ecus-h6}
  d \beta^1= d \beta^2= d \beta^3= d \beta^4 =0, \quad
  d \beta^5= \beta^{12},\quad
  d \beta^6= \beta^{13},
\end{equation}
and the complex structure $J$ and the fundamental form $F_t$ are
given by
\begin{equation}\label{JFt-h6}
J \beta^1=-\beta^4, J \beta^2=-\beta^3, J
\beta^5=-\beta^6,\quad\quad
F_t=\beta^{14}+\beta^{23}+4t^2\,\beta^{56}.
\end{equation}
Notice that the associated metric is $g_t=\beta^1\otimes
\beta^1+\cdots + \beta^4\otimes \beta^4+ 4t^2\, \beta^5\otimes
\beta^5+ 4t^2\,\beta^6\otimes \beta^6$. From now on we consider the
Hermitian balanced $SU(3)$ structure
$(F_t,\Psi_t)$ on $\frh_6$ given by
(\ref{JFt-h6}) and
\begin{equation}\label{balanced-h6}
\Psi_t = 2t\, (\beta^{1}+i\, \beta^{4})\wedge(\beta^{2}+i\,
\beta^{3})\wedge(\beta^{5}+i\, \beta^{6}).
\end{equation}

\medskip

\noindent $\bullet$ \emph{Solutions to equations IIB arising from
Hermitian balanced structures
on $\frh_6$:} For each $t\not=0$, the structure
$(F_t,\Psi_t)$ provides solutions to the SUSY equations IIB.
According to Theorem~\ref{IIB}, let us consider the half-flat SU(2)
structure $(\hat J_1, \hat g_1, \hat \alpha_1, \hat \omega,  \hat
\Omega)$ given by
$$\hat \alpha_1 = \beta^1+i\,
\beta^4,\quad \hat \omega=\beta^{23}+4t^2\,\beta^{56},\quad \hat
\Omega= 2t\, (\beta^{25}- \beta^{36}) +2 t\, i(\beta^{26}+
\beta^{35}).$$ By (\ref{ecus-h6}) the forms $\beta^{25}- \beta^{36}$
and $\beta^{26}+ \beta^{35}$ are closed, therefore for any
$k_{||}\in (0,1)$ we conclude that the $SU(2)$ structure $(\hat J_2,
\hat g_2, \hat \alpha_2= k_{||}\, Im(\hat \alpha_1)-i \frac{Re(\hat
\alpha_1)}{k_{||}}, \hat \omega, \hat \Omega)$ is half-flat.
Therefore, in terms of the basis $\{\beta^1,\ldots,\beta^6\}$ we get
the following explicit solutions $(\alpha, Re (\Omega)_{||}, Re
(\Omega)_{\perp}, Im (\Omega))$ of the SUSY equations
(\ref{SUSYeqIIB}):
$$
Re(\alpha)=\frac{1}{k_{||}} \beta^1,\quad\ Im(\alpha)=\beta^4,\quad\
Re (\Omega)_{||}=k_{\perp} (\beta^{23}+4t^2\,\beta^{56}),$$

$$Re (\Omega)_{\perp}=2t\, k_{||}(\beta^{25}- \beta^{36}),\quad\quad Im
(\Omega)= 2 t(\beta^{26}+ \beta^{35}),$$ where
$k_{\perp}=\sqrt{1-k_{||}^2}$. Notice that the fluxes are $H=0=F_1$,
and $e^{i\theta}g_s*F_3=4t^2(\beta^{126}-\beta^{135})$.

\medskip

\noindent $\bullet$ \emph{Solutions of the SUSY equations IIA on $\frh_6$:} For
each $t\not=0$, we consider the SU(2) structure $(\hat J, \hat  g,
\hat \alpha, \hat \omega, \hat \Omega)$ given by
\begin{equation}\label{symplectic-h6}
\hat \alpha = \beta^1+i\, \beta^4,\quad \hat
\omega=2t\,\beta^{25}-2t\,\beta^{36},\quad \hat \Omega=
(\beta^{2}+2ti\, \beta^{5})\wedge(-\beta^{3}+2ti\, \beta^{6}).
\end{equation}
Since the forms $\beta^{14}$ and $\beta^{25}- \beta^{36}$ are
closed, and
$$d(Re(\hat \alpha\wedge \hat \Omega))=\beta^1\wedge d(\beta^{23}+ 4t^2\beta^{56})
+2t \beta^4\wedge d(\beta^{26}+ \beta^{35})=0,$$ we have that the
SU(2) structure is symplectic half-flat for any $t\not=0$. According
to Theorem~\ref{IIA}, since $d(Re( \hat \alpha)) =d\beta^1=0$, the
forms $(\alpha, Re (\Omega)_{||} , Re (\Omega)_{\perp}, Im
(\Omega))$ given by
$$
Re(\alpha)= \beta^1,\quad\ Im(\alpha)=\frac{1}{k_{||}}
\beta^4,\quad\ Re (\Omega)_{||}= -2t
k_{||}(\beta^{26}+\beta^{35}),$$

$$Re (\Omega)_{\perp}=2t\, k_{\perp}(\beta^{25}- \beta^{36}),\quad\quad Im
(\Omega)= -\beta^{23}- 4 t^2 \beta^{56},$$ provide solutions to the
SUSY equations (\ref{SUSYeqIIA}). Notice that the fluxes are
$H=-4t^2 \frac{k_{\perp}}{k_{||}} (\beta^{126}-\beta^{135})$,
$F_0=0$, $g_s * F_2=-4
\frac{t^2}{k_{||}^2}(\beta^{1246}-\beta^{1345})$ and $F_4= 0$.

\bigskip

Next we show that solutions to equations IIA (resp. IIB) in general
are not stable by small deformations inside the class of half-flat
structures. For that, we first show explicitly that any
Hermitian balanced structure
$(F_t,\Psi_t)$ on $\frh_6$ given by
(\ref{JFt-h6})--(\ref{balanced-h6}) can be deformed into a
symplectic half-flat structure (\ref{symplectic-h6}) along a curve
of half-flat structures. For each $\vartheta\in \mathbb{R}$, let us
consider the SU(3) structure $(F_t^{\vartheta},\Psi_t^{\vartheta})$
given by
$$
F_t^{\vartheta}=\beta^{14}+\cos \vartheta\, \beta^{23}+ 2t\sin
\vartheta\, \beta^{25}- 2t\sin \vartheta\, \beta^{36}+4t^2\cos
\vartheta\, \beta^{56},$$ and
$$
\Psi_t^{\vartheta}=(\beta^{1}+i\, \beta^{4})\wedge(\beta^{2}+i\,
\cos \vartheta\, \beta^{3}+2t i \sin \vartheta\,
\beta^{5})\wedge(-\sin \vartheta\, \beta^{3}+2t\cos \vartheta\,
\beta^{5}+ 2t i \beta^{6}).
$$
A direct calculation shows that $Re (\Psi_t^{\vartheta})$ is closed
and $dF_t^{\vartheta}=4t^2\cos \vartheta (\beta^{126}-\beta^{135})$,
which implies that $F_t^{\vartheta}\wedge dF_t^{\vartheta}=0$.
Therefore, the structure is half-flat for any $\vartheta$, and
$(F_t^{0},\Psi_t^{0})$ is the
Hermitian balanced structure
given by (\ref{JFt-h6})--(\ref{balanced-h6}), and
$(F_t^{\frac{\pi}{2}},\Psi_t^{\frac{\pi}{2}})$ is the symplectic
structure (\ref{symplectic-h6}).

Since $F_t^{\vartheta}$ is symplectic if and only if $\cos
\vartheta=0$, by Theorem~\ref{IIA} we have that the half-flat
structure $(F_t^{\vartheta},\Psi_t^{\vartheta})$ does not solve
equations (\ref{SUSYeqIIA}) for $\vartheta \in (0,\frac{\pi}{2})$.

On the other hand, let us fix $\vartheta$ and consider the half-flat
structure $(F_t^{\vartheta},\Psi_t^{\vartheta})$. For any
$\lambda\in (0,1)$, a direct calculation shows that the structure
$(F=F_t^{\vartheta}, \Phi_{\lambda})$ given by
$$
\Phi_{\lambda}=(\lambda \beta^{4}-i\,
\frac{\beta^{1}}{\lambda})\wedge(\beta^{2}+i\, \cos \vartheta\,
\beta^{3}+2t i \sin \vartheta\, \beta^{5})\wedge(-\sin \vartheta\,
\beta^{3}+2t\cos \vartheta\, \beta^{5}+ 2t i \beta^{6}).
$$
is half-flat if and only if $\sin \vartheta=0$. From
Theorem~\ref{IIB} we conclude that the half-flat structure
$(F_t^{\vartheta},\Psi_t^{\vartheta})$ does not provide a solution
to equations (\ref{SUSYeqIIB}) for $\vartheta \in
(0,\frac{\pi}{2})$.

Therefore, we have proved the following result:

\begin{proposition}\label{non-stable}
The half-flat structure $(F_t^{\vartheta},\Psi_t^{\vartheta})$ does
not solve neither $(\ref{SUSYeqIIA})$ nor~$(\ref{SUSYeqIIB})$ for
any $\vartheta\in (0,\frac{\pi}{2})$. Therefore, solutions to the
SUSY equations IIA or IIB in general are not stable by small
deformations inside the class of half-flat structures.
\end{proposition}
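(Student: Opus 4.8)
The plan is to recognise that the ingredients have already been assembled in the discussion preceding the statement, and to organise them through Theorems~\ref{IIA} and~\ref{IIB}. First I would record that $(F_t^\vartheta,\Psi_t^\vartheta)$ is, for every $\vartheta\in\R$, a genuine curve of half-flat $SU(3)$ structures on $\frh_6$ passing through the Hermitian balanced structure \eqref{balanced-h6} at $\vartheta=0$ and through the symplectic half-flat structure \eqref{symplectic-h6} at $\vartheta=\frac{\pi}{2}$; since the former yields solutions of \eqref{SUSYeqIIB} and the latter a solution of \eqref{SUSYeqIIA}, the non-stability assertion follows as soon as one shows that no intermediate $\vartheta\in(0,\frac{\pi}{2})$ solves either system.

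For the IIA part, the key observation is that Theorem~\ref{IIA} identifies the half-flat structures arising from solutions of \eqref{SUSYeqIIA} with those that are symplectic and satisfy $d(Re\,\hat\alpha)=0$; here $\hat\alpha=\beta^1+i\beta^4$, so $d(Re\,\hat\alpha)=0$ automatically and it only remains to decide when $F_t^\vartheta$ is closed. I would therefore compute $dF_t^\vartheta$ directly from the structure equations \eqref{ecus-h6}, obtaining $dF_t^\vartheta=4t^2\cos\vartheta\,(\beta^{126}-\beta^{135})$, which is nonzero precisely when $\cos\vartheta\neq0$. Hence $(F_t^\vartheta,\Psi_t^\vartheta)$ fails to be symplectic for $\vartheta\in(0,\frac{\pi}{2})$, and by Theorem~\ref{IIA} it cannot solve \eqref{SUSYeqIIA} there.

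For the IIB part I would use the corresponding reduction supplied by Theorem~\ref{IIB}: a half-flat structure $(\hat J_1,\hat g_1,\hat\alpha_1,\hat\omega,\hat\Omega)$ with $d\hat\alpha_1=0$ produces a solution of \eqref{SUSYeqIIB} for a parameter $k_{||}\in(0,1)$ if and only if the companion $SU(2)$ structure obtained by the rotation $\hat\alpha_1\mapsto\hat\alpha_2=k_{||}\,Im(\hat\alpha_1)-i\,Re(\hat\alpha_1)/k_{||}$ is again half-flat; the fundamental form is unchanged under this rotation, so $d(\hat F\wedge\hat F)=0$ is automatic and only closedness of $Re(\hat\alpha_2\wedge\hat\Omega)$ has to be tested. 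Thus I would replace in $\Psi_t^\vartheta$ the factor $\beta^1+i\beta^4$ by $\lambda\beta^4-i\beta^1/\lambda$ to form a $3$-form $\Phi_\lambda$ over the same $F_t^\vartheta$ and compute $d(Re\,\Phi_\lambda)$ from \eqref{ecus-h6}; the outcome is that $(F_t^\vartheta,\Phi_\lambda)$ is half-flat if and only if $\sin\vartheta=0$, for every $\lambda\in(0,1)$. Since the forward direction of Theorem~\ref{IIB} forces any IIB solution to supply a value $k_{||}\in(0,1)$ for which the companion structure is half-flat, there is no solution for $\vartheta\in(0,\frac{\pi}{2})$, and combining the two cases gives the proposition.

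The only genuine labour consists of the two exterior-derivative computations on the six-dimensional algebra $\frh_6$, both mechanical once \eqref{ecus-h6} is in hand. The point that calls for care — which I regard as the main subtlety rather than a real obstacle — is the logic in the IIB case: a solution comes equipped with one fixed value of $k_{||}$, so ruling it out requires checking that the companion structure fails to be half-flat for \emph{every} $\lambda\in(0,1)$, and this is exactly what the factor proportional to $\sin\vartheta$ in $d(Re\,\Phi_\lambda)$ guarantees.
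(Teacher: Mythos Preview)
Your proposal is correct and follows essentially the same route as the paper: for IIA you invoke Theorem~\ref{IIA} together with the computation $dF_t^\vartheta=4t^2\cos\vartheta\,(\beta^{126}-\beta^{135})$, and for IIB you invoke Theorem~\ref{IIB} and check that the companion structure $(F_t^\vartheta,\Phi_\lambda)$ is half-flat only when $\sin\vartheta=0$, for every $\lambda\in(0,1)$. Your explicit attention to the quantifier over $\lambda$ in the IIB case is the one point the paper leaves implicit, but the argument is otherwise identical.
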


\subsection{Compact solvmanifolds}

In this section we describe in detail two compact solvmanifolds
solving the SUSY equations IIA and IIB.

\subsubsection{Example}\label{solvable-1} Let us consider the $6$-dimensional $2$-step  completely solvable  Lie algebra
${\mathfrak s}_1=(0,0,13,-14,15,-16)$ with structure equations
\begin{equation}\label{ecus-solvable1}
d\beta^1=d\beta^2=0,\quad d\beta^3= \beta^{13},\quad
d\beta^4=-\beta^{14},\quad d\beta^5= \beta^{15},\quad
d\beta^6=-\beta^{16}.
\end{equation}
The corresponding simply-connected Lie group $S_1$ is isomorphic to
$\R \times  (\R \ltimes_{\phi} \R^4)$, where
$$
\phi (t) = \left (  \begin{array}{llll} e^t&0&0&0\\ 0&e^{-t}&0&0\\
0&0&e^t&0\\ 0&0&0&e^{-t} \end{array} \right ), \quad t \in \R.
$$

Since $\phi(1) = {\mbox {exp}}^{SL(4, \R)} (\phi' (0)) \in SL(4,
\Z)$, by \cite[Theorem 4]{Gor} we have that $\Gamma = \Z
\ltimes_{\phi} \Z^4$ is a lattice in $\R \ltimes_{\phi} \R^4$ and
therefore $\Z \times \Gamma = \Gamma_1$ is a lattice of $S_1$. By Hattori 's Theorem \cite{Hattori} we have that the de Rham cohomology of the compact quotient $S_1/\Gamma_1$ is isomorphic to the Chevalley-Eilenberg cohomology  $H^* ({\mathfrak s}_1)$ of ${\mathfrak s}_1$ and thus in particular $b_1 (S_1/\Gamma_1)=2$, $b_2 (S_1/\Gamma_1)=5$ and $b_3 (S_1/\Gamma_1)=8$.

Let us consider the almost complex structure
$$
J\beta^1=-\beta^2,\quad J\beta^3=-\beta^5,\quad J\beta^4=\beta^6.
$$
The basis of (1,0)-forms $\omega^1=\beta^1+i\,\beta^2$,
$\omega^2=\beta^3+i\,\beta^5$ and $\omega^3=-\beta^4+i\,\beta^6 $
satisfies
$$
d\omega^1=0,\quad d\omega^2=\frac12 \omega^{12}+\frac12
\omega^{\bar{1}2},\quad d\omega^3=-\frac12\omega^{13}-\frac12
\omega^{\bar{1}3}.
$$
Therefore, the almost complex structure $J$ is integrable. Since the
2-form $F=\beta^{12}+\beta^{35}-\beta^{46}$ satisfies that
$F^2=2(\beta^{1235}-\beta^{1246}-\beta^{3546})$ is closed, we get a
Hermitian balanced
SU(2) structure.

\medskip

\noindent $\bullet$ \emph{Solutions of equations IIB arising from the
Hermitian balanced structure
on~$\frs_1$:} The previous structure provides solutions to
the SUSY equations IIB. Let $(\hat J_1, \hat g_1, \hat \alpha_1,
\hat \omega, \hat \Omega)$ be the half-flat SU(2) structure given by
$$\hat \alpha_1 = \beta^1+i\, \beta^2,\quad \hat \omega=\beta^{35}-\beta^{46},\quad \hat \Omega=
-\beta^{34}- \beta^{56} + i(\beta^{36}+ \beta^{45}).$$ It follows
from (\ref{ecus-solvable1}) that the 2-forms $\beta^{34}$,
$\beta^{56}$ and $\beta^{36}+ \beta^{45}$ are closed, which implies
that for any $k_{||}\in (0,1)$ the $SU(2)$ structure $(\hat J_2,
\hat g_2, \hat \alpha_2= k_{||}\, Im(\hat \alpha_1)-i \frac{Re(\hat
\alpha_1)}{k_{||}}, \hat \omega, \hat \Omega)$ is half-flat. By
Theorem~\ref{IIB} we get the following solutions $(\alpha, Re
(\Omega)_{||}, Re (\Omega)_{\perp}, Im (\Omega))$ of the SUSY
equations (\ref{SUSYeqIIB}):
$$
Re(\alpha)=\frac{1}{k_{||}} \beta^1,\quad\ Im(\alpha)=\beta^2,\quad\
Re (\Omega)_{||}=k_{\perp} (\beta^{35}-\beta^{46}),$$

$$Re (\Omega)_{\perp}= -k_{||}(\beta^{34}+ \beta^{56}),\quad\quad Im
(\Omega)= \beta^{36}+ \beta^{45},$$ where
$k_{\perp}=\sqrt{1-k_{||}^2}$. Notice that the fluxes are $H=0=F_1$,
and $e^{i\theta}g_s*F_3=2\beta^1\wedge(\beta^{35}+\beta^{46})$.

\medskip

\noindent $\bullet$ \emph{Solutions of the SUSY equations IIA on $\frs_1$:} Let
us consider the SU(2) structure $(\hat J, \hat  g, \hat \alpha, \hat
\omega, \hat \Omega)$ given by
$$\hat \alpha = \beta^1+i\,\beta^2,\quad \hat \omega=
\beta^{34} +\beta^{56},\quad \hat \Omega= (\beta^{3}
+i\beta^{4})\wedge (\beta^{5} +i\beta^{6}).
$$
Since the forms $\beta^{12}$, $\beta^{34}$ and $\beta^{56}$ are
closed, and
$$d(Re(\hat \alpha\wedge \hat \Omega))=\beta^1\wedge d(\beta^{35}- \beta^{46})
- \beta^2\wedge d(\beta^{36}+ \beta^{45})=0,$$ we have that the
SU(2) structure is symplectic half-flat. By Theorem~\ref{IIA}, since
$d(Re( \hat \alpha)) = d\beta^1=0$, the forms $(\alpha, Re
(\Omega)_{||} , Re (\Omega)_{\perp}, Im (\Omega))$ given by
$$
Re(\alpha)= \beta^1,\quad\ Im(\alpha)=\frac{1}{k_{||}}
\beta^2,\quad\ Re (\Omega)_{||}= - k_{||}(\beta^{36}+\beta^{45}),$$

$$Re (\Omega)_{\perp}=k_{\perp}(\beta^{34}+ \beta^{56}),\quad\quad Im
(\Omega)= \beta^{35}- \beta^{46},$$ provide solutions to the SUSY
equations (\ref{SUSYeqIIA}). The fluxes are $H=2
\frac{k_{\perp}}{k_{||}} (\beta^{135}+\beta^{146})$, $F_0=0$, $g_s *
F_2=-\frac{2}{k_{||}^2}(\beta^{1235}+\beta^{1246})$ and $F_4= 0$.

\medskip

As in the previous example, the particular solutions on $\frs_1$ to
equations IIA and IIB given above are not stable by small
deformations inside the class of half-flat structures. For each
$\vartheta\in \mathbb{R}$, the SU(2) structure
$(F^{\vartheta},\Psi^{\vartheta})$ given by
$$
F^{\vartheta}=\beta^{12}+\cos \vartheta (\beta^{34}+ \beta^{56}) +
\sin \vartheta (\beta^{35}- \beta^{46})$$ and
$$
\Psi^{\vartheta}=(\beta^{1}+i\, \beta^{2})\wedge(\beta^{3}+i\, \cos
\vartheta\, \beta^{4}+ i \sin \vartheta\, \beta^{5})\wedge(-\sin
\vartheta\, \beta^{4}+\cos \vartheta\, \beta^{5}+ i \beta^{6})
$$
is half-flat, and for $\vartheta=0$ (resp.
$\vartheta=\frac{\pi}{2}$) we get the symplectic (resp.
Hermitian balanced)
half-flat structure given above. A direct calculation shows that the
half-flat structure $(F^{\vartheta},\Psi^{\vartheta})$ does not
solve neither $(\ref{SUSYeqIIA})$ nor~$(\ref{SUSYeqIIB})$ for any
$\vartheta\in (0,\frac{\pi}{2})$.

\subsubsection{Example}\label{solvable-2} Let us consider the solvable Lie
algebra ${\frs}_2=(0,0,-13-24,-14+23,15+26,16-25)$, that is, there
is a basis of 1-forms $\{\beta^1,\ldots,\beta^6\}$ satisfying
\begin{equation}\label{ecus-solvable2}
\left\{
  \begin{aligned}
  &d \beta^1= d \beta^2=0, \\
  &d \beta^3= -\beta^{13}-\beta^{24},\\
  &d \beta^4= -\beta^{14}+\beta^{23},\\
  &d \beta^5= \beta^{15}+\beta^{26},\\
  &d \beta^6= \beta^{16}-\beta^{25}.
  \end{aligned}
\right.
\end{equation}
The existence of a lattice $\Gamma_2$ of $S_2$ of the associated simply connected
solvable Lie group $S_2$  was  proved in \cite{Yamada} (see also \cite{deTom}). The  de Rham cohomology  of the compact quotient $S_2/ \Gamma_2$ (also known as Nakamura manifold)  is  not isomorphic  to $H^*({\frs}_2)$ (see \cite{deTom,Yamada} and more recently \cite{Guan,CF} for the cohomology of solvmanifolds). In particular $b_1(S_2/ \Gamma_2) = 2$, $b_2(S_2/ \Gamma_2) = 5$ and $b_3 (S_2/ \Gamma_2) = 8$.

Let us consider the almost complex structure
$$
J\beta^1=-\beta^2,\quad J\beta^3=-\beta^4,\quad J\beta^5=-\beta^6.
$$
The basis of (1,0)-forms $\omega^1=\beta^1+i\,\beta^2$,
$\omega^2=\beta^3+i\,\beta^4$ and $\omega^3=\beta^5+i\,\beta^6 $
satisfies
$$
d\omega^1=0,\quad d\omega^2=\omega^{2\bar{1}},\quad
d\omega^3=-\omega^{3\bar{1}},
$$
that is, $J$ is integrable.

For each $t\in \mathbb{R}-\{0\}$, the SU(3) structure $(F_t,\Psi_t)$
given by
$$
F_t=t^2 \beta^{12} + \beta^{34}+\beta^{56},\quad\quad  \Psi_t = t\,
(\beta^{1}+i\, \beta^{2})\wedge(\beta^{3}+i\,
\beta^{4})\wedge(\beta^{5}+i\, \beta^{6}),
$$
defines a 1-parametric family of (non-equivalent)
Hermitian balanced
$SU(3)$ structures on $\frs_2$ and thus a $1$-parametric family of
(non-equivalent)
Hermitian balanced
$SU(2)$ structures.

 Notice that the associated metric is
$g_t=t^2\,\beta^1\otimes \beta^1+t^2\,\beta^2\otimes \beta^2 +
\beta^3\otimes \beta^3+ \cdots+ \beta^6\otimes \beta^6$.

\medskip

\noindent $\bullet$ \emph{Solutions to equations IIB arising from
Hermitian balanced structures
on $\frs_2$:} For each $t\not=0$, the structure
$(F_t,\Psi_t)$ provides solutions to the SUSY equations IIB.
According to Theorem~\ref{IIB}, we consider the half-flat SU(2)
structure $(\hat J_1, \hat g_1, \hat \alpha_1, \hat \omega,  \hat
\Omega)$ given by
$$\hat \alpha_1 = t\, \beta^1+i\,
t\, \beta^2,\quad \hat \omega=\beta^{34}+\beta^{56},\quad \hat
\Omega=  \beta^{35}- \beta^{46} + i(\beta^{36}+ \beta^{45}).$$ By
(\ref{ecus-solvable2}) the forms $\beta^{35}- \beta^{46}$ and
$\beta^{36}+ \beta^{45}$ are closed, therefore for any $k_{||}\in
(0,1)$ we conclude that the $SU(2)$ structure $(\hat J_2, \hat g_2,
\hat \alpha_2= k_{||}\, Im(\hat \alpha_1)-i \frac{Re(\hat
\alpha_1)}{k_{||}}, \hat \omega, \hat \Omega)$ is half-flat.
Therefore, in terms of the basis $\{\beta^1,\ldots,\beta^6\}$ we get
the following explicit solutions $(\alpha, Re (\Omega)_{||}, Re
(\Omega)_{\perp}, Im (\Omega))$ of the SUSY equations
(\ref{SUSYeqIIB}):
$$
Re(\alpha)=\frac{t}{k_{||}} \beta^1,\quad\
Im(\alpha)=t\,\beta^2,\quad\ Re (\Omega)_{||}=k_{\perp}
(\beta^{34}+\beta^{56}),$$

$$Re (\Omega)_{\perp}= k_{||}(\beta^{35}- \beta^{46}),\quad\quad Im
(\Omega)= \beta^{36}+ \beta^{45},$$ where
$k_{\perp}=\sqrt{1-k_{||}^2}$. Notice that the fluxes are $H=0=F_1$,
and $e^{i\theta}g_s*F_3=-2\beta^1\wedge(\beta^{34}-\beta^{56})$.

\medskip

\noindent $\bullet$ \emph{Solutions of the SUSY equations IIA on $\frs_2$:} For
each $t\not=0$, we consider the SU(2) structure $(\hat J, \hat  g,
\hat \alpha, \hat \omega, \hat \Omega)$ given by
$$\hat \alpha = t\, \beta^1+i\, t\, \beta^2,\quad \hat \omega=-
\beta^{36} - \beta^{45},\quad \hat \Omega= (\beta^{6}
+i\beta^{3})\wedge (\beta^{5} +i\beta^{4}).
$$
Since the forms $\beta^{12}$ and $\beta^{36}+ \beta^{45}$ are
closed, and
$$\frac{1}{t}\, d(Re(\hat \alpha\wedge \hat \Omega))=\beta^1\wedge d(\beta^{34}+ \beta^{56})
+ \beta^2\wedge d(\beta^{35}- \beta^{46})=0,$$ we have that the
SU(2) structure is symplectic half-flat for any $t\not=0$. According
to Theorem~\ref{IIA}, since $d(Re( \hat \alpha)) = t\,d\beta^1=0$,
the forms $(\alpha, Re (\Omega)_{||} , Re (\Omega)_{\perp}, Im
(\Omega))$ given by
$$
Re(\alpha)= t\,\beta^1,\quad\ Im(\alpha)=\frac{t}{k_{||}}
\beta^2,\quad\ Re (\Omega)_{||}= - k_{||}(\beta^{35}-\beta^{46}),$$

$$Re (\Omega)_{\perp}=-k_{\perp}(\beta^{36}+ \beta^{45}),\quad\quad Im
(\Omega)= -\beta^{34}- \beta^{56},$$ provide solutions to the SUSY
equations (\ref{SUSYeqIIA}). Notice that the fluxes are $H=2
\frac{k_{\perp}}{k_{||}} (\beta^{134}-\beta^{156})$, $F_0=0$, $g_s *
F_2=-2\frac{t}{k_{||}^2}(\beta^{1234}-\beta^{1256})$ and $F_4= 0$.

\vskip1cm

\noindent {\bf Acknowledgments.} We would like to thank the referees for their valuable comments and remarks
that have improved the paper.
This work has been partially
supported through Project MICINN (Spain) MTM2008-06540-C02-02,
Project MIUR ``Riemannian Metrics and Differentiable Manifolds" and
by GNSAGA of INdAM.

\smallskip

{\small

\end{document}